\newtheorem{theorem}{Theorem}[section]
\newtheorem{claim}{}[theorem]
\newtheorem{lemma}[theorem]{Lemma}
\theoremstyle{definition}
\newcommand{\bF}{\mathbb F}
\newcommand{\bR}{\mathbb R}
\newcommand{\bZ}{\mathbb Z}
\newcommand{\cF}{\mathcal{F}}
\newcommand{\cG}{\mathcal{G}}
\newcommand{\cL}{\mathcal{L}}
\newcommand{\cM}{\mathcal{M}}
\newcommand{\cP}{\mathcal{P}}
\newcommand{\cO}{\mathcal{O}}
\newcommand{\cT}{\mathcal{T}}
\newcommand{\sfourmatrix}[4]{\left(\begin{smallmatrix}#1 & #2 \\ #3 & #4\end{smallmatrix}\right)}
\newcommand{\fourmatrix}[4]{\left(\begin{matrix}#1 & #2 \\ #3 & #4\end{matrix}\right)}
\newcommand{\ol}{\overline}
\newcommand{\olpg}{\ol\PG}
\newcommand{\bpg}{\olpg}
\newcommand{\hpg}{\widehat\PG}
\newcommand{\sqbr}[1]{\left\lbrack #1 \right\rbrack}
\DeclareMathOperator{\si}{si}
\DeclareMathOperator{\aut}{Aut}
\DeclareMathOperator{\cl}{cl}
\DeclareMathOperator{\spn}{span}
\DeclareMathOperator{\rank}{rank}
\DeclareMathOperator{\col}{col}
\DeclareMathOperator{\PG}{PG}
\DeclareMathOperator{\GF}{GF}
\DeclareMathOperator{\AG}{AG}
\newcommand{\cPG}{\mathcal{P}\mathcal{G}}
\DeclareMathOperator{\rowspace}{row}
\newcommand{\ba}{\ol A}
\newcommand{\ha}{\widehat A}
\newcommand{\elem}{\epsilon}
\newcommand{\del}{\setminus \!}
\newcommand{\con}{/}
\newcommand{\dcon}{/\!\!/}
\author{Peter Nelson, Stefan H.M. van Zwam}
\title[$\GF(q)$-regular matroids]{Matroids representable over fields with a common subfield}
\begin{document}
\begin{abstract}
A matroid is \emph{$\GF(q)$-regular} if it is representable over all proper superfields of the field $\GF(q)$. We show that, for highly connected matroids having a large projective geometry over $\GF(q)$ as a minor, the property of $\GF(q)$-regularity is equivalent to representability over both $\GF(q^2)$ and $\GF(q^t)$ for some odd integer $t \ge 3$. We do this by means of an exact structural description of all such matroids. 
\end{abstract}
\thanks{ This research was supported by the
National Science Foundation grant 1161650.}
\maketitle
\section{Introduction}

For a field $\bF_0$, we say a matroid $M$ is \emph{$\bF_0$-regular} if $M$ is representable over every field $\bF$ having $\bF_0$ as a proper subfield. 

Let $n \ge 2$ be an integer, $q$ be a prime power, and $N$ be a $\PG(n-1,q)$-restriction of a matroid $M \cong \PG(n-1,q^2)$. Let $L_0$ be a line of $N$ and $x \in \cl_M(L_0) - L_0$. We denote by $\hpg(n-2,q)$ any matroid isomorphic to $\si((M \con x)|E(N))$. If $n \ge 3$ and $f \in E(N)-L_0$, then we denote by $\bpg(n-1,q)$ any matroid isomorphic to $M|(E(N) \cup \cl_M(\{x,f\}))$. (We will show later that these matroids are uniquely determined up to isomorphism.) A matroid $M$ is \emph{round} if $E(M)$ is not the union of two hyperplanes, or equivalently if $M$ is infinitely vertically connected. Our main theorem is the following:

\begin{theorem}\label{main}
	Let $q$ be a prime power and $M$ be a round rank-$r$ matroid with a $\PG(12q^{12}+19,q)$-minor. The following are equivalent:
	\begin{enumerate}
		\item\label{mainone} $M$ is $\GF(q)$-regular;
		\item\label{maintwo} $M$ is representable over $\GF(q^2)$ and $\GF(q^{t})$ for some odd integer $t \ge 3$; and
		\item\label{mainthree} $\si(M)$ is a restriction of either $\hpg(r-1,q)$ or $\bpg(r-1,q)$. 
	\end{enumerate}
\end{theorem}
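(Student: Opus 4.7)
The plan is to prove the cycle $(\ref{mainone}) \Rightarrow (\ref{maintwo})$, $(\ref{mainthree}) \Rightarrow (\ref{mainone})$, and $(\ref{maintwo}) \Rightarrow (\ref{mainthree})$, with the last implication being the substantive direction. The first, $(\ref{mainone}) \Rightarrow (\ref{maintwo})$, is immediate: since both $\GF(q^2)$ and $\GF(q^3)$ are proper superfields of $\GF(q)$, one may simply take $t=3$.

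For $(\ref{mainthree}) \Rightarrow (\ref{mainone})$, it suffices to establish $\GF(q)$-regularity of the two canonical matroids $\hpg(r-1,q)$ and $\bpg(r-1,q)$, since the property is preserved under simplification and restriction. Given a proper superfield $\bF$ of $\GF(q)$, I would pick $\alpha \in \bF \setminus \GF(q)$ and write down an explicit $\bF$-representation: the standard representation of $\PG(r-1,q)$ over $\bF$, augmented by columns built from $\alpha$ that realize the extra dependencies prescribed by the construction. Since $\hpg$ and $\bpg$ are by definition minors of $\PG(r-1,q^2)$ obtained by adjoining a single ``extra direction'' $x$, the verification should reduce to a short list of polynomial identities in $\alpha$, each of which holds in every field that properly contains $\GF(q)$.

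The main work is $(\ref{maintwo}) \Rightarrow (\ref{mainthree})$, and I would attack it in two stages. First, using the huge $\PG(12q^{12}+19,q)$-minor together with $\GF(q^2)$-representability and roundness, one should be able to confine $M$: a structural theorem of Geelen--Gerards--Whittle type (which I expect to be furnished by a preliminary lemma) should force $\si(M)$ to be a restriction of $\PG(r-1,q^2)$ containing a spanning $\PG(r-1,q)$. Second, with $M$ so located, every element of $\si(M) \setminus E(\PG(r-1,q))$ corresponds to a point of $\PG(r-1,q^2) \setminus \PG(r-1,q)$, and $\GF(q^t)$-representability for odd $t$ severely constrains which sets of such extras may coexist. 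The key arithmetic fact is that $\gcd(t,2)=1$, which makes $\GF(q^2)$ and $\GF(q^t)$ ``incompatible'' extensions of $\GF(q)$ in the sense that their intersection is just $\GF(q)$; hence any configuration of extras surviving both representability tests should be rigidly determined by a single twist $x$ together with at most one further flat, producing $\hpg(r-1,q)$ or $\bpg(r-1,q)$ respectively.

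The principal obstacle is this last classification step: a precise analysis of which rank-$2$ and rank-$3$ extensions of $\PG(r-1,q)$ living inside $\PG(r-1,q^2)$ remain representable over $\GF(q^t)$ for odd $t$. Roundness plays a crucial role, since it prevents decomposing $M$ along a pair of hyperplanes and therefore forces the ``extra'' structure to be globally coherent rather than an amalgam of locally legal pieces. I expect the argument to proceed by identifying $\hpg$ and $\bpg$ as the unique maximal extensions of $\PG(r-1,q)$ surviving both representability constraints, and then verifying that no further point of $\PG(r-1,q^2) \setminus \PG(r-1,q)$ can be attached without creating a small subconfiguration forbidden over $\GF(q^t)$.
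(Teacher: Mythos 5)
Your top-level skeleton matches the paper's: the cycle (\ref{mainone})$\Rightarrow$(\ref{maintwo}), (\ref{mainthree})$\Rightarrow$(\ref{mainone}), (\ref{maintwo})$\Rightarrow$(\ref{mainthree}), with the first two implications easy and the last carrying all the weight, and the incompatibility of $\GF(q^2)$ with $\GF(q^t)$ for odd $t$ as the engine. But your plan for (\ref{maintwo})$\Rightarrow$(\ref{mainthree}) has two genuine gaps. The first is the mechanism by which $\GF(q^t)$-representability is actually used. The paper converts it into a minor-exclusion condition: it constructs an explicit family $\cO(q)$ of rank-$3$ represented matroids (a $\PG(2,q)$ plus three points whose associated $\GF(q)$-lines have empty common intersection) and proves (Lemma~\ref{obad}) that these are representable over a field $\bF$ if and only if $\bF$ contains $\GF(q^2)$; hence a matroid representable over $\GF(q^t)$ with $t$ odd has no $\cO(q)$-minor, and the real content is Theorem~\ref{maintech}: a round $\GF(q^2)$-represented matroid with a huge $\PG(\cdot,q)$-minor and no $\cO(q)$-minor has the stated structure. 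Your proposal gestures at ``a small subconfiguration forbidden over $\GF(q^t)$'' but never identifies it; producing it, proving its representability properties, and proving the exact classification of which extensions of a spanning $\GF(q)$-geometry avoid it (Lemma~\ref{magic}) is a large fraction of the paper and is where the actual matroids $\hpg$ and $\bpg$ emerge.

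The second gap is your ``first stage.'' There is no confinement theorem that upgrades a $\PG(12q^{12}+19,q)$-\emph{minor} (of size independent of $r$) to a \emph{spanning} $\PG(r-1,q)$-restriction of $\si(M)$, and the claim is not even consistent with the target: the $\GF(q)$-rational points of $\hpg(r-1,q)$ form a projective geometry with one point removed, so matroids satisfying (\ref{mainthree}) need not contain any spanning $\PG(r-1,q)$-restriction at all. Lemma~\ref{confinement} applies only to a matroid that is itself isomorphic to $\PG(n-1,q)$. The paper bridges the gap from ``minor'' to ``spanning restriction'' only in the base case of an induction on $|E(M)|$; the general step assumes the structure for $M\con e$, uses the augmenting Lemmas~\ref{haugment} and~\ref{laugment} to lift it to $M$ or isolate a $q$-bad submatrix, and then uses Tutte's Linking Theorem and the tangle machinery of Sections~\ref{tanglesec1}--\ref{tanglesec2} (Lemma~\ref{tanglecontractnumbers}) to connect that bad submatrix to a small projective geometry restriction and exhibit an $\cO(q)$-minor, contradicting $\GF(q^t)$-representability. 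None of this appears in your plan, so as written the argument does not close; roundness is used there quantitatively (to guarantee vertical $5$- and $9$-connectivity and full connectivity to the geometry), not merely to rule out hyperplane decompositions.
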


This exactly characterises all $\GF(q)$-regular matroids that are sufficiently `rich' and highly connected; the equivalence of (\ref{mainone}) and (\ref{maintwo}) is strongly reminiscent of Tutte's characterisation of regular matroids of the usual sort, and motivates our use of the word. This equivalence may hold for all matroids (this has essentially been conjectured for $q = 2$ in [\ref{pvz}, Conjecture 6.8]), but the characterisation in (\ref{mainthree}) requires some extra hypotheses, and we briefly discuss the ones we chose. 

As one could otherwise construct counterexamples by taking $2$-sums and $3$-sums, some connectivity assumption is needed. However, the hypothesis of roundness is probably overkill. The theorem likely holds for vertically $4$-connected matroids, and many of our techniques apply in this more general setting. Proving a `vertically $4$-connected' version of the theorem would require analysis of how the structure in (\ref{mainthree}) propagates over $4$-separations. 

The hypothesis of having some sort of underlying `richness', here a large projective geometry minor, is also necessary; the structure in (\ref{mainthree}) does not describe all vertically $4$-connected $\GF(q)$-regular matroids. Indeed, Gerards [\ref{gthesis}] defined a class of signed-graphic matroids representable over every field with at least three elements; this class contains counterexamples to our theorem of arbitrarily high branch-width. However, Gerards' counterexamples are nearly planar; it is possible that a very similar structure to that in (\ref{mainthree}) holds for all vertically $4$-connected matroids with a large enough clique minor. Round $\GF(q^2)$-representable matroids of huge rank have a large clique minor [\ref{ggwep}], so in the round setting it is possible that our hypothesis of a large projective geometry minor could be replaced with a `large rank' hypothesis with few other changes to the theorem statement.  

Though the material in this paper is self-contained, sections~\ref{tanglesec1} and~\ref{tanglesec2} make essential use of the theory of tangles and some currently unpublished techniques due to Geelen, Gerards and Whittle [\ref{ggwnonprime}].

\section{Preliminaries}
	
	We largely follow the notation of Oxley [\ref{oxley}].	We also write $\elem(M)$ for $|\si(M)|$. For a positive integer $n$, we denote the set $\{1, \dotsc, n\}$ by $[n]$. Finally, if $\bF_0$ is a subfield of a field $\bF$ and $A$ is an $\bF$ matrix, we write $\rowspace_{\bF_0}(A)$ for the vector space containing all linear combinations of the rows of $A$ with coefficients in $\bF_0$. We define $\col_{\bF_0}(A)$ similarly. 
	
	The versions of connectivity we consider are all `vertical'; for $k \in \bZ^+ \cup \{\infty\}$ a set $A \subseteq E(M)$ is $\emph{vertically $k$-separating}$ in $M$ if $\lambda_M(A) < k$ and $\min(r_M(A), r(M \del A)) \ge k$, and $M$ is $\emph{vertically $k$-connected}$ if $M$ has no vertically $k'$-separating subsets for any $k' \le k$. $M$ is \emph{round} if it is vertically $\infty$-connected; for example cliques, projective geometries and non-binary affine geometries are round. A matroid $M$ is vertically $k$-connected if and only if its simplification is vertically $k$-connected. Moreover if $M$ is vertically $k$-connected then $M \con e$ is vertically $(k-1)$-connected for each $e \in E(M)$; in particular if $M$ is round then so is $M \con e$. We will use the following slight strengthening of a well-known result on connectivity; see [\ref{oxley}, Theorem 8.5.7].
	
	\begin{theorem}[Tutte's Linking Theorem]\label{linking}
		Let $M$ be a matroid and $A,B \subseteq E(M)$ be disjoint sets. There is a minor $N$ of $M$ so that $E(N) = A \cup B$, $N|A = M|A$, $N|B = M|B$ and $\lambda_N(A) = \kappa_M(A,B)$. 
	\end{theorem}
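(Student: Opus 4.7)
The plan is to induct on $|E(M) \setminus (A \cup B)|$. At each step I pick some $e \in E(M) \setminus (A \cup B)$ and either delete or contract it, arranging matters so that $\kappa_M(A,B)$ is preserved and neither $M|A$ nor $M|B$ is disturbed. The base case $E(M) = A \cup B$ is immediate with $N = M$. I will use two easy observations throughout: deletion never affects $M|A$ or $M|B$; and for a non-loop element $e$, contracting $e$ preserves $M|A$ if and only if $e \notin \cl_M(A)$ (equivalently $r_M(A \cup \{e\}) = r_M(A) + 1$), and analogously for $B$.

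The inductive step splits into two cases. First, if $e \in \cl_M(A) \cup \cl_M(B)$, then contraction is forbidden and I need to verify $\kappa_{M \setminus e}(A,B) = \kappa_M(A,B)$. Monotonicity of $\kappa$ under minor operations handles one inequality. For the other, say $e \in \cl_M(A)$; since $e \in \cl_M(E(M) \setminus \{e\})$, the element is not a coloop, so $r(M \setminus e) = r(M)$. If $Y \subseteq E(M) \setminus (B \cup \{e\})$ with $A \subseteq Y$ minimises $\lambda_{M \setminus e}$, the fact that $e \in \cl_M(Y)$ yields by direct rank computation that $\lambda_M(Y \cup \{e\}) = \lambda_{M \setminus e}(Y)$. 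But $Y \cup \{e\}$ contains $A$ and avoids $B$, so $\lambda_M(Y \cup \{e\}) \ge \kappa_M(A,B)$, completing the inequality.

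The main obstacle is the second case, $e \notin \cl_M(A) \cup \cl_M(B)$, where both $M \setminus e$ and $M/e$ preserve the restrictions and I must show that at least one preserves $\kappa_M(A,B)$. This is the classical uncrossing step in Tutte's proof. I would assume for contradiction that both $\kappa_{M \setminus e}(A,B)$ and $\kappa_{M/e}(A,B)$ are strictly less than $k := \kappa_M(A,B)$, with witness sets $X_1, X_2 \subseteq E(M) \setminus (B \cup \{e\})$ each containing $A$. Comparing $\lambda_M$ against $\lambda_{M \setminus e}$ and $\lambda_{M/e}$ forces $\lambda_M(X_1) = \lambda_M(X_2) = k$, $e \notin \cl_M(E(M) \setminus X_1 \setminus \{e\})$, and $e \in \cl_M(X_2)$. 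Submodularity of $\lambda_M$ together with the fact that $X_1 \cap X_2$ and $X_1 \cup X_2$ are feasible separators (each contains $A$ and avoids $B$) pins $\lambda_M(X_1 \cup X_2) = k$. The closure information propagates by monotonicity: $e \in \cl_M(X_1 \cup X_2)$ while $e \notin \cl_M(E(M) \setminus (X_1 \cup X_2) \setminus \{e\})$. Evaluating $\lambda_M\bigl((X_1 \cup X_2) \cup \{e\}\bigr)$ then yields $k-1$, contradicting the definition of $k$ since this set still contains $A$ and avoids $B$. Iterating the inductive step exhausts $E(M) \setminus (A \cup B)$ and produces the required minor $N$ with $E(N) = A \cup B$, $N|A = M|A$, $N|B = M|B$, and $\lambda_N(A) = \kappa_M(A,B)$.
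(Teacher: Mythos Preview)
Your argument is correct and is the standard inductive proof of Tutte's Linking Theorem. The paper does not supply its own proof of this statement; it is quoted as a known result with a reference to Oxley [\ref{oxley}, Theorem~8.5.7], so there is nothing to compare against beyond noting that your proof is essentially the one found there.
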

	
	To avoid complications arising from inequivalent representations, we will often consider matroids defined by a representation rather than axiomatically. If $\bF$ is a field, then an \emph{$\bF$-represented matroid} on ground set $E$ is a pair $M = (U,E)$, where $U$ is a subspace of $\bF^E$. This represented matroid has rank function given by $r_M(X) = \dim(U[X])$ for each $X \subseteq E$, where $U[X]$ is the projection of $U$ onto $\bF^{X}$. Where confusion might arise, we refer to a matroid defined in the usual way as an \emph{abstract} matroid; if $M$ is an $\bF$-represented matroid then we write $\tilde{M}$ for the abstract matroid with the same rank function as $M$. 
	
	Given a matrix $A \in \bF^{X \times E}$, we write $M(A)$ for the $\bF$-represented matroid $(\rowspace(A),E)$ and $\tilde{M}(A)$ for the associated abstract matroid; here $A$ is an \emph{$\bF$-representation} of $M(A)$. We also need to formalize deletion and contraction in this context; given an $\bF$-representation $A$ of an $\bF$-represented matroid $M$ and a set $X \subseteq E(M)$, we write $M \del X$ for the $\bF$-represented matroid $M(A[E(M)-X])$. It is easiest to define contraction in terms of duality; if $M = (U,E)$ is an $\bF$-represented matroid then let $M^* = (U^{\perp},E)$, where $U^\perp = \{v \in \bF^E: \langle v,u \rangle = 0 \text{ for all $u \in U$}\}$, and $M \con X = (M^* \del X)^*$. Given a particular representation $A$, this is equivalent to the usual matrix interpretation of contraction where we row-reduce and take a submatrix of $A$. We extend these definitions to define a \emph{minor} and \emph{restriction} of an $\bF$-represented matroid, as well as extending all other usual matroidal notions such as connectivity. 
	
	If $\bF_0$ is a subfield of $\bF$, then two $\bF$-matrices $A_1,A_2$ are \emph{$\bF_0$-row-equivalent} if one can be obtained from the other by elementary row-operations only involving coefficients in $\bF_0$. Furthermore, the matrices $A_1,A_2$ are \emph{$\bF_0$-projectively equivalent} if there is a matrix $A_1'$ that is $\bF_0$-row-equivalent to $A_1$  that can be obtained from $A_2$ by scaling columns by nonzero elements of $\bF_0$. We also say that the $\bF$-represented matroids $M(A_1)$ and $M(A_2)$ are $\bF_0$-projectively equivalent. If $\bF_0 = \bF$ then we just say the matrices or represented matroids are \emph{projectively equivalent}, and write $A_1  \approx A_2$ and $M(A_1) \approx M(A_2)$. It is clear that if $M \approx M'$ then $\tilde{M} = \tilde{M'}$. For each integer $n$, let $\cPG(n-1,q)$ denote the set of $\GF(q)$-matrices $G$ with row-set $[n]$ satisfying $\tilde{M}(G) \cong \PG(n-1,q)$. 
\section{Algebra}
	
 We frequently consider an extension field $\bF$ of a field $\bF_0$; our main theorem applies just when $\bF_0 = \GF(q)$ and $\bF = \GF(q^2)$, but some lemmas apply for arbitrary $\bF_0$. When the extension has degree $2$ with $\bF = \bF_0(\omega)$, we often use the fact that $\bF$ is a dimension-$2$ vector space over $\bF_0$ with basis $\{1,\omega\}$. We require a few lemmas relating $\bF_0$ and $\bF$ in various contexts; the first is proved in [\ref{n13}]. 
\begin{lemma}\label{confinement}
	Let $n \ge 3$ be an integer, $q$ be a prime power, and $\bF$ be a field with a $\GF(q)$-subfield. If $A$ is an $\bF$-matrix with $M(A) \cong \PG(n-1,q)$, then $A$ is projectively equivalent to a $\GF(q)$-matrix. 
\end{lemma}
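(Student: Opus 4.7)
The plan is to exploit the rigidity of the projective structure: since $\PG(n-1,q)$ contains $n+1$ points in general position, one can normalise so that a fixed basis appears as $I_n$ and one further point $f$ appears as $(1,\ldots,1)^T$, and then show that every remaining column is forced, after scaling, to have entries in $\GF(q)$. The critical observation is that inside the sub-geometry $P = M(A)$ one can propagate the all-ones point to many support-restricted cousins: for each $l$, the line in $P$ through the all-ones column and $e_l$ meets the hyperplane $\{x_l = 0\}$ (a hyperplane of $P$) in a unique further $P$-point, which in $\bF$-coordinates must be the vector $f - e_l$. Iterating this one coordinate at a time, for every proper subset $L \subsetneq [n]$ the $(0,1)$-vector $f_L$ supported on $[n] \setminus L$ is a point of $P$. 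In particular, for every triple $\{i,j,k\}$ the restriction of $P$ to $\spn\{e_i,e_j,e_k\}$ is a $\PG(2,q)$ containing the frame $e_i,e_j,e_k,e_i+e_j+e_k$.

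The heart of the argument is the $n=3$ case, handled in this fixed frame. Set $S_{12} = \{\lambda \in \bF : (1,\lambda,0)^T$ represents a point of $P\}$; since the line $L_{12} = \spn\{e_1,e_2\}$ carries $q+1$ points of $P$ including $e_1,e_2$, and since in $\PG(2,q)$ the lines $L_{12}$ and $\spn\{e_3, f\}$ must meet (at $(1,1,0)^T$), we obtain $|S_{12}| = q$ with $0,1 \in S_{12}$; analogously for $S_{13},S_{23}$. A slope-bookkeeping argument on the line through $(1,\lambda,0)^T$ and $e_3$ shows that the affine part of $P$ is exactly $\{(1,\lambda,\mu)^T : \lambda \in S_{12},\,\mu \in S_{13}\}$. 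Now parametrise the $q+1$ points of $P$ on the line through $(1,\lambda,0)^T$ and $(1,0,1)^T$: the affine ones are of the form $(1,(1-s)\lambda,s)^T$ with $s \in S_{13}$ and $(1-s)\lambda \in S_{12}$, and a careful count forces $(1-S_{13})\lambda \subseteq S_{12}$ for every $\lambda \in S_{12}^*$. Taking $\lambda = 1$ yields $1-S_{13}=S_{12}$ by size, and feeding this back gives $\lambda \cdot S_{12} \subseteq S_{12}$ for all $\lambda \in S_{12}^*$. Hence $S_{12}^*$ is a multiplicative subgroup of $\bF^*$ of order $q-1$; every element is a root of $x^{q-1}-1$, and since $\GF(q)^*\subseteq\bF$ already accounts for all $q-1$ roots of this degree-$(q-1)$ polynomial, $S_{12}^* = \GF(q)^*$. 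A parallel analysis forces $S_{13}=S_{23}=\GF(q)$ and treats the $q+1$ points of $P$ on $L_{23}$, so $P$ is the standard $\GF(q)$-subgeometry.

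The general $n\ge 3$ case is a short induction on $|\mathrm{supp}(c)|$ for columns $c$ of $A$. If $|\mathrm{supp}(c)| \le 3$, choose any $\{i,j,k\}\supseteq\mathrm{supp}(c)$ and apply the three-dimensional analysis above inside $\spn\{e_i,e_j,e_k\}$, using $e_i+e_j+e_k$ as the distinguished frame point supplied by the cascade. If $|\mathrm{supp}(c)|\ge 4$, then for each $i\in\mathrm{supp}(c)$ the line in $P$ through $c$ and $e_i$ meets $\{x_i=0\}$ at a $P$-point whose $\bF$-coordinates agree with those of $c$ outside position $i$; the inductive hypothesis applied to this shorter-support point places those coordinates in $\GF(q)$, and letting $i$ range over the support pins down every entry of $c$. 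The main obstacle is the multiplicative-closure step in the $n=3$ case: one has to deal carefully with the dichotomy of whether the point at infinity of the auxiliary line lies in $P$, since only one of the two possibilities is consistent with the line carrying exactly $q+1$ points of $P$, and choosing an inappropriate auxiliary line yields strictly weaker information than is needed to identify $S_{12}^*$ as a subgroup of $\bF^*$.
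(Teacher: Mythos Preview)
The paper does not actually prove this lemma; it simply cites an earlier paper of Nelson. Your argument is a correct self-contained proof along classical lines: fix a projective frame, reduce to the rank-$3$ case via the cascade that produces the $(0,1)$-vectors $f_L$, and in rank $3$ identify the coordinatising set $S_{12}$ with $\GF(q)$ by showing that $S_{12}^*$ is multiplicatively closed. The key step --- the count that forces the point at infinity of the line through $(1,\lambda,0)^T$ and $(1,0,1)^T$ to lie in $P$, so that all $q$ values $s\in S_{13}$ yield affine $P$-points and hence $(1-S_{13})\lambda\subseteq S_{12}$ --- is handled correctly, as is the use of $|S_{12}^*|=q-1$ together with $\GF(q)^*\subseteq\bF$ to pin $S_{12}^*$ down as exactly the $(q-1)$th roots of unity in $\bF$. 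The support-size induction for general $n$ is routine and correct (the condition $|\mathrm{supp}(c)|\ge 4$ is exactly what is needed so that for each coordinate $k$ one can find $i\in\mathrm{supp}(c)\setminus\{j,k\}$ to knock out). Since the paper provides only a citation, there is nothing to compare against; your proof would make the paper self-contained on this point.
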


 We will apply the next lemma in the case where $j = 2$ and $h = 3$. 

\begin{lemma}\label{gfqvectorinplane}
	Let $\bF = \bF_0(\omega)$ be a degree-$2$ extension field of a field $\bF_0$ and let $j,h,t \in \bZ^+$ satisfy $2j > h$ and $j,h \le t$. If $V$ is an $h$-dimensional subspace of $\bF_0^t$ and $U$ is a $j$-dimensional subspace of $\bF^t$ such that $U \subseteq \spn_{\bF}(V)$, then $U \cap V$ is nontrivial. 
\end{lemma}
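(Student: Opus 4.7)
The plan is to work with $\bF_0$-dimensions throughout and exploit the direct-sum decomposition $\spn_{\bF}(V) = V \oplus \omega V$ (as an $\bF_0$-space), then finish by a simple rank-nullity count.

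First I would verify that $\spn_{\bF}(V)$, as an $\bF_0$-subspace of $\bF^t$, decomposes as $V + \omega V$ with this sum direct. Pick an $\bF_0$-basis $e_1,\dotsc,e_h$ of $V$; since each $e_i \in \bF_0^t$, working coordinate-by-coordinate and using that $\{1,\omega\}$ is an $\bF_0$-basis of $\bF$ shows that any relation $\sum a_i e_i + \omega \sum b_i e_i = 0$ with $a_i,b_i \in \bF_0$ forces $a_i = b_i = 0$. Consequently every element of $\spn_{\bF}(V)$ has a unique expression $v_1 + \omega v_2$ with $v_1,v_2 \in V$, and $\dim_{\bF_0} \spn_{\bF}(V) = 2h$.

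Next I would introduce the $\bF_0$-linear projection $\pi : \spn_{\bF}(V) \to V$ defined by $\pi(v_1 + \omega v_2) = v_2$, and restrict it to $U$. An element $u \in U$ lies in $\ker(\pi|_U)$ precisely when $u = v_1 \in V$, so $\ker(\pi|_U) = U \cap V$. Since $U$ is $j$-dimensional over $\bF$, it is $2j$-dimensional over $\bF_0$, while $\mathrm{im}(\pi|_U) \subseteq V$ has $\bF_0$-dimension at most $h$. Rank-nullity over $\bF_0$ then gives
\[
\dim_{\bF_0}(U \cap V) \;=\; 2j - \dim_{\bF_0}(\mathrm{im}(\pi|_U)) \;\ge\; 2j - h \;>\; 0,
\]
where the final strict inequality is the hypothesis $2j > h$. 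Hence $U \cap V$ contains a nonzero vector.

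There is no real obstacle; the only subtlety is establishing the direct-sum decomposition, which uses crucially that $V$ is $\bF_0$-rational (i.e., $V \subseteq \bF_0^t$) so that $\omega \notin \bF_0$ can be used to separate the two summands. The conditions $j,h \le t$ play no role beyond ensuring the subspaces can exist.
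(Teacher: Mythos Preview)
Your proof is correct and follows essentially the same approach as the paper: both identify $\spn_{\bF}(V)$ as the $2h$-dimensional $\bF_0$-space $V \oplus \omega V$ and then finish by an $\bF_0$-dimension count. The paper phrases the last step as the dimension formula $\dim(\varphi(U)\cap\varphi(V)) \ge 2j + h - 2h$ after an explicit coordinate isomorphism $\varphi: \spn_{\bF}(V) \to \bF_0^{2h}$, whereas you use rank--nullity for the projection onto the $\omega$-component; these are the same computation in different clothing.
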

\begin{proof}
	Let $\{b_1, \dotsc, b_h\}$ be a basis for $V$ and let $W = \spn_{\bF}(V)$, noting that each $w \in W$ is expressible in the form $\sum_{i=1}^h(\lambda_i + \omega \mu_i)b_i$ for some unique $\lambda,\mu \in \bF_0^h$. Let $\varphi: W \to \bF_0^{2h}$ be the invertible linear transformation defined by $\varphi\left(\sum_{i = 1}^h(\lambda_i + \omega \mu_i)b_i\right) = (\lambda_1, \dotsc, \lambda_h, \mu_1, \dotsc, \mu_h)$. Now $\varphi(U)$ and $\varphi(V)$ are subspaces of $\bF_0^{2h}$ with $\dim(\varphi(U)) = 2j$ and $\dim(\varphi(V)) = h$, so $\dim(\varphi(U) \cap \varphi(V)) = 2j + h - 2h > 0$. Therefore $U \cap V$ is nontrivial, as required. 
\end{proof}

\begin{lemma}\label{sunday0}
	Let $\bF_0$ be a field and $\bF = \bF_0(\omega)$ be a degree-$2$ extension field of $\bF_0$. Let $h,d,n \in \bZ_0^+$ satisfy $h \le d$ and let $A,B \in \bF_0^{d \times n}$ be matrices such that $\rank(A + \omega B) = d$. If $\rank\binom{A}{B} = 2d - h$ then there is a rank-$h$ matrix $Q \in \bF^{h \times d}$ such that $Q(A + \omega B)$ is an $\bF_0$-matrix. 
\end{lemma}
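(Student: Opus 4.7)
Since $A + \omega B$ has $\bF$-rank $d$, the map $L\colon \bF^d \to \bF^n$, $L(q) = q(A+\omega B)$, is injective, so it suffices to exhibit $h$ $\bF$-linearly independent vectors $q_1,\dotsc,q_h \in \bF^d$ with each $L(q_i) \in \bF_0^n$: the matrix $Q$ whose rows are the $q_i$ then has $\bF$-rank $h$ (as their images under $L$ remain independent) and $Q(A+\omega B)$ is an $\bF_0$-matrix by construction. I will find the $q_i$ by an $\bF_0$-dimension count on $S := L^{-1}(\bF_0^n) \subseteq \bF^d$, leveraging the observation that $\bF_0$-linearly independent vectors of $\bF_0^n$ are automatically $\bF$-linearly independent in $\bF^n$: since $\{1,\omega\}$ is an $\bF_0$-basis of $\bF$, any $\bF$-linear dependence among such vectors splits into two $\bF_0$-linear dependences, each forcing its coefficients to vanish.

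To compute $\dim_{\bF_0} S$, write $q = q_1 + \omega q_2$ with $q_1,q_2 \in \bF_0^d$ and let $c,e \in \bF_0$ satisfy $\omega^2 = c + e\omega$; then a direct expansion gives
\[
q(A+\omega B) = (q_1 A + c\,q_2 B) + \omega\bigl(q_2 A + (q_1 + e q_2)B\bigr),
\]
so $q \in S$ precisely when $q_2 A + (q_1+eq_2)B = 0$. The invertible $\bF_0$-linear substitution $(q_1,q_2) \mapsto (q_1 + eq_2, q_2) =: (q_1',q_2)$ rewrites this condition as $(q_1',q_2)\binom{B}{A} = 0$, identifying $\dim_{\bF_0} S$ with the left $\bF_0$-nullity of $\binom{B}{A}$, which by hypothesis equals $2d - \rank\binom{A}{B} = h$.

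Finally, choose an $\bF_0$-basis $q_1,\dotsc,q_h$ of $S$. The vectors $v_i := L(q_i) \in \bF_0^n$ are $\bF_0$-linearly independent by injectivity of $L$, hence $\bF$-linearly independent by the observation above; applying injectivity of $L$ once more yields that the $q_i$ themselves are $\bF$-independent, so the matrix $Q$ with rows $q_1, \dotsc, q_h$ has the required properties. The only nontrivial step is choosing the substitution $q_1' = q_1 + eq_2$: this is what absorbs the dependence on $\omega^2$ and makes the hypothesis on $\rank\binom{A}{B}$ directly usable, and it is effectively the whole content of the lemma.
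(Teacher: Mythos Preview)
Your proof is correct, and the underlying construction coincides with the paper's: both identify the rows of $Q$ with the left $\bF_0$-nullspace of $\binom{A}{B}$. The paper writes this explicitly as $Q = (\omega - t)Q_1 + Q_2$ where the rows of $(Q_1\,|\,Q_2)$ form an $\bF_0$-basis of that nullspace; your substitution $q_1' = q_1 + e q_2$ is exactly the inverse of this parametrisation (with $(q_1',q_2)$ corresponding to $(Q_2,Q_1)$).

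The genuine difference is in establishing $\rank_{\bF}(Q) = h$. The paper argues by contradiction: assuming some nonzero $x + \omega y$ annihilates $Q$ on the left, it manufactures---via an auxiliary $2\times 2$ matrix $J$ and a page of identities---a nonzero vector $u + \omega v$ with $(u+\omega v)(A+\omega B) = 0$, contradicting $\rank(A+\omega B) = d$. Your route is cleaner: the images $L(q_i)$ lie in $\bF_0^n$ and are $\bF_0$-independent by injectivity of $L$, hence $\bF$-independent (since $\bF_0$-independence in $\bF_0^n$ survives extension of scalars), hence the $q_i$ themselves are $\bF$-independent. This bypasses the paper's computation entirely; the price is only that one must articulate the ``$\bF_0$-independent implies $\bF$-independent'' step, which you do.
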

\begin{proof}
	Let $\omega^2 = s + \omega t$ for $s,t \in \bF_0$. If $\rank\binom{A}{B} = 2d - h$ then there are matrices $Q_1,Q_2 \in \bF_0^{h \times d}$ such that $(Q_1 | Q_2)\binom{A}{B} = Q_1 A + Q_2 B = 0$ and $\rank(Q_1 | Q_2) = h$. Let $Q = (\omega - t)Q_1 + Q_2$; we have $Q(A + \omega B) = (Q_2A - tQ_1A + sQ_1B) + \omega(Q_1 A + Q_2B)$ which is an $\bF_0$-matrix. 
	
	It remains to show that $\rank(Q) = h$. If not, then there are row vectors $x,y \in \bF_0^h$ such that $x + \omega y \ne 0$ and $(x + \omega y)Q = 0$. This gives $(xQ_2 - txQ_1 + syQ_1) + \omega(xQ_1 + yQ_2) = 0$, implying that 
	\begin{equation}\label{sunday1}
		\fourmatrix{-t}{s}{1}{0} \binom{x}{y} Q_1 + \binom{x}{y}Q_2 = 0. 
	\end{equation}
	Note that the matrix $J = \sfourmatrix{1}{t}{0}{-1}$ satisfies 
$\sfourmatrix{0}{s}{1}{t}J = J\sfourmatrix{t}{-s}{-1}{0}$. Set $\binom{u}{v} = J\binom{x}{y}Q_1$; we will argue that $u + \omega v \ne 0$ and $(u + \omega v)(A + \omega B) = 0$, which contradicts $\rank(A+\omega B) = d$. If $u + \omega v = 0$, then $\binom{u}{v} = 0$ and, since $J$ is nonsingular, $\binom{x}{y}Q_1 = 0$. This implies $xQ_1 = yQ_1 = 0$, which together with (\ref{sunday1}) and the fact that $\rank(Q_1 | Q_2) = h$ yields $\binom{x}{y} = 0$, which is not the case. Therefore $\binom{u}{v} \ne 0$. We have $(u+ \omega v)(A + \omega B) = (uA + svB) + \omega(uB + vA + tvB) = \left\langle\binom{1}{\omega},\binom{u}{v}A + \sfourmatrix{0}{s}{1}{t}\binom{u}{v}B\right\rangle.$ Now 
	\begin{align*}
		\binom{u}{v}A + \fourmatrix{0}{s}{1}{t}\binom{u}{v}B &= J\binom{x}{y}Q_1 A + \fourmatrix{0}{s}{1}{t}J\binom{x}{y}Q_1B \\ 
		&= J\left(\binom{x}{y}Q_1A + \fourmatrix{t}{-s}{-1}{0}\binom{x}{y}Q_1B\right) \\
		&= -J\left(\binom{x}{y}Q_2 + \fourmatrix{-t}{s}{1}{0}\binom{x}{y}Q_1\right)B,
	\end{align*}
	since $Q_1A = -Q_2B$. Now combining the above with (\ref{sunday1}) we see that $(u + \omega v)(A + \omega B) = 0$, contradicting the fact that $\rank(A + \omega B) = d$ and $u + \omega v \ne 0$. 
	
\end{proof}

The above lemma has the following as a straightforward corollary. 

\begin{lemma}\label{sunday}
	Let $\bF = \bF_0(\omega)$ be a degree-$2$ extension field of a field $\bF_0$. Let $h,d,m,n \in \bZ_0^+$ satisfy $0 \le h \le d \le n$ and $A,B \in \bF_0^{d \times n}$ and $P \in \bF_0^{m \times n}$ be such that $\rank\binom{A+ \omega B}{P} = m+d$, $\rank(P) = m$ and $\rank\left(\begin{smallmatrix}A \\ B \\ P \end{smallmatrix}\right) \le m+2d-h$. There exist matrices $A',B' \in \bF_0^{d \times n}$ such that $\binom{A + \omega B}{P}$ and $\binom{A' + \omega B'}{P}$ are row-equivalent and $B'$ has $h$ zero rows. 
\end{lemma}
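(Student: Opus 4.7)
The plan is to use $\bF_0$-column operations to put $P$ in the standard form $(I_m \mid 0)$, clear the first $m$ columns of $A + \omega B$ using the rows of $P$, and then apply Lemma~\ref{sunday0} directly to the residual $d \times (n-m)$ blocks. Since right-multiplication by an invertible matrix preserves row-equivalence, choosing $T \in \bF_0^{n \times n}$ invertible with $PT = (I_m \mid 0)$ and working with $(AT, BT, PT)$ reduces the problem to the case $P = (I_m \mid 0)$; any solution in the transformed coordinates transfers back by right-multiplying by $T^{-1}$, which preserves membership in $\bF_0$-matrices and preserves which rows are zero.

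Once $P = (I_m \mid 0)$, I partition $A = (A_1 \mid A_2)$ and $B = (B_1 \mid B_2)$ with $A_i, B_i$ over $\bF_0$, and for each $i,j$ subtract $(A_{i,j} + \omega B_{i,j})$ times row $j$ of $P$ from row $i$ of the top block. This row operation replaces the $i$th top row by $(0 \mid A_{i,2} + \omega B_{i,2})$, killing $A_1$ and $B_1$ while leaving $A_2$ and $B_2$ untouched. The two rank hypotheses then translate cleanly into $\rank(A_2 + \omega B_2) = d$ and $\rank\binom{A_2}{B_2} \le 2d - h$.

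Setting $h' := 2d - \rank\binom{A_2}{B_2}$, I have $h \le h' \le d$, where the upper bound uses $\rank\binom{A_2}{B_2} \ge \rank(A_2 + \omega B_2) = d$. Lemma~\ref{sunday0} then yields a rank-$h'$ matrix $Q' \in \bF^{h' \times d}$ with $Q'(A_2 + \omega B_2) \in \bF_0^{h' \times (n-m)}$. I pick any rank-$h$ submatrix $Q$ of $Q'$, extend $Q$ to an invertible $\widetilde Q \in \bF^{d \times d}$ whose top $h$ rows are $Q$, and apply $\widetilde Q$ as a row operation to the top block; this replaces $A_2 + \omega B_2$ by an $\bF$-matrix $C$ whose top $h$ rows lie in $\bF_0^{n-m}$. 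Decomposing $C = A_2'' + \omega B_2''$ uniquely with $A_2'', B_2'' \in \bF_0^{d \times (n-m)}$ via the basis $\{1, \omega\}$ of $\bF$ over $\bF_0$ makes the top $h$ rows of $B_2''$ zero, and then $A'' := (0 \mid A_2'')$ and $B'' := (0 \mid B_2'')$, after being pushed back through $T^{-1}$, yield the required $\bF_0$-matrices $A', B'$.

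No step is genuinely hard; the only bookkeeping subtlety is that the hypothesis $\rank\begin{pmatrix} A \\ B \\ P \end{pmatrix} \le m + 2d - h$ is an inequality rather than an equality, which I handle by invoking Lemma~\ref{sunday0} at the sharp value $h'$ and then thinning $Q'$ down to an $h \times d$ submatrix of full row rank.
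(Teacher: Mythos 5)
Your proof is correct and is essentially the intended derivation: the paper omits the argument entirely, stating only that Lemma~\ref{sunday} is a ``straightforward corollary'' of Lemma~\ref{sunday0}, and your reduction (normalising $P$ to $(I_m\mid 0)$ by an invertible $\bF_0$-column transformation, clearing the first $m$ columns of the top block, translating the two rank hypotheses to $\rank(A_2+\omega B_2)=d$ and $\rank\binom{A_2}{B_2}\le 2d-h$, and invoking Lemma~\ref{sunday0} at the sharp value $h'$ before thinning to $h$ rows) is exactly the natural way to fill that in. The bookkeeping points you flag -- the inequality versus equality in the rank hypothesis, and the fact that zero rows and $\bF_0$-entries survive right-multiplication by $T^{-1}$ -- are handled correctly.
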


\section{Examples}

We now investigate the two classes of $\GF(q)$-regular matroids from our main theorem. We define them differently from in the introduction in order to prove that they are both well-defined and $\GF(q)$-regular. We will use the fact that projective geometries are \emph{modular}; that is, that every pair of flats $F_1,F_2$ satisfies $r(F_1 \cap F_2) = r(F_1) + r(F_2) - r(F_1 \cup F_2)$.

 Let $\bF$ be a field with a $\GF(q)$-subfield, $n \ge 3$ be an integer, $A \in \cP\cG(n-1,q)$ and $N = \tilde{M}(A) \cong \PG(n-1,q)$. Let $L_0$ be a line of $N$ and $v \in \col_{\bF}(A[L_0])$ be not parallel to any column of $A[L_0]$. Let $f \in E(N)-L_0$ and $\cL$ be the collection of lines of $\cl_N(L_0 \cup \{f\})$ not containing $f$, noting that $|\cL| = q^2$. For each $L \in \cL$, let $v_L$ be a nonzero vector in the rank-$1$ subspace $\col_\bF(A_L) \cap \col_{\bF}(v | A[f])$. Let $X = \{x_L : L \in \cL\}$ be a $q^2$-element set and let $\overline{A} \in \bF^{[n] \times (E(N) \cup X)}$ be the matrix so that $\overline{A}[E(N)] = A$ and $\overline{A}[x_L] = v_L$ for each $L \in \cL$.

\begin{lemma}
	The matroid $\tilde{M}(\overline{A})$ is determined up to isomorphism by the choice of $n$ and $q$. 
\end{lemma}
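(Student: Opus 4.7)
The plan is to give an intrinsic combinatorial description of $\tilde M(\ol A)$ depending only on $n$ and $q$, so that different valid parameter choices produce isomorphic matroids. Since each $v_L \in \col_\bF(A[L]) \subseteq \col_\bF(A[F])$ with $F := \cl_N(L_0 \cup \{f\})$, every element of $X$ lies in the closure of $F$ in $\tilde M(\ol A)$; meanwhile $\tilde M(\ol A)|E(N) = N \cong \PG(n-1,q)$ and $\tilde M(\ol A)|F = N|F \cong \PG(2,q)$. Because $F$ is a modular flat of $N$ (every flat of a projective geometry is modular), $\tilde M(\ol A)$ is the generalized parallel connection of $N$ with the rank-$3$ matroid $P := \tilde M(\ol A)|(F \cup X)$ across the common restriction $F$. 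So it suffices to show that $P$, viewed as a matroid with a distinguished $\PG(2,q)$-restriction on $F$, is determined by $q$ and by the pair $(L_0,f)$ inside this $\PG(2,q)$.

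To describe $P$ combinatorially, note that all vectors $v_L$ lie on the common $\bF$-line $\ell := \col_\bF(v\,|\,A[f])$, so $\{f\} \cup X$ is collinear in $P$. The nonparallelism of $v$ to any column of $A[L_0]$ forces $\ell$ to contain no point of $E(N)$ other than $f$: any such extra point would exhibit $v$ as $\bF$-parallel to a nonzero $\GF(q)$-combination of columns of $A[L_0]$, hence $\bF$-parallel to some column of $A[L_0]$, contradicting the choice of $v$. From this single identity one deduces the remaining incidences in $P$: the $q^2$ points of $X$ are pairwise distinct, since two lines $L,L' \in \cL$ meet at a $\GF(q)$-point $\ne f$ which cannot lie on $\ell$; no line of $\PG(2,q)$ through $f$ contains a point of $X$, since it meets $\ell$ only at $f$; and each $L \in \cL$ gains exactly the new point $x_L$. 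These incidences, together with the single long line $\{f\} \cup X$ of size $q^2+1$, pin down the rank-$3$ matroid $P$ once an identification $F \cong \PG(2,q)$ is fixed, because the bijection $L \mapsto x_L$ is then forced by the requirement that $x_L$ lie on $L$.

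Finally, $\aut(\PG(n-1,q))$ acts transitively on pairs $(L_0,f)$ with $L_0$ a line and $f \notin L_0$, so different choices of $(L_0,f)$ yield isomorphic copies of $P$, and hence isomorphic generalized parallel connections. The main obstacle, and the only step requiring genuine field-theoretic input, is the identity $\ell \cap E(N) = \{f\}$; everything else—the pairwise distinctness of the $v_L$, the absence of new points on lines through $f$, and the gluing via the generalized parallel connection—reduces to formal projective-geometric bookkeeping once that identity is in hand.
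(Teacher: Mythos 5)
Your proof is correct, but it takes a genuinely different route from the paper's. The paper works globally: it writes down the complete list of cyclic flats of $\tilde{M}(\ol{A})$ in terms of the cyclic flats of $N$, the plane $P=\cl_N(L_0\cup\{f\})$ and the point $f$, invokes the fact that a matroid is determined by its cyclic flats, and then uses transitivity of $\aut(\PG(n-1,q))$ on pairs (plane, point of that plane). You instead localise: you show every element of $X$ lies in $\cl_M(F)$ for the modular plane $F$, decompose $M$ as the generalized parallel connection of $N$ with the rank-$3$ piece $P=M|(F\cup X)$ across $F$, and then pin down $P$ by listing its long lines (which, for a simple rank-$3$ matroid, does determine it). The incidence facts you verify for $P$ are exactly the same information the paper encodes in its cyclic-flat list, so the combinatorial core is shared; what differs is the reduction. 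Your route buys a cleaner separation between the field-theoretic input (the single identity $\ell\cap E(N)=\{f\}$, which you correctly identify as the only nontrivial point, and whose verification could be spelled out slightly more: one should note that a putative second $\GF(q)$-point of $\ell$ forces, via the line it spans with $f$ inside $F$, a point of $L_0$ lying on $\ell$) and the purely matroidal gluing; the cost is that you lean on the unproved assertion that a matroid obtained by extending $N$ only by points in the span of a modular flat $F$ is the generalized parallel connection of $N$ with its restriction to $\cl_M(F)$. That assertion is true and standard (it is the uniqueness half of Brylawski's modular-join construction, and in the represented setting it follows from $\spn_{\bF}(V_1)\cap\spn_{\bF}(V_2)=\spn_{\bF}(V_1\cap V_2)$ for $\GF(q)$-subspaces $V_1,V_2$), but it should be cited or justified, whereas the paper's cyclic-flat check, though tedious, is self-contained. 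Both arguments terminate in equivalent transitivity statements, so the endings match.
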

\begin{proof}
	Let $M = \tilde{M}(\ol{A})$. We have $M \del X = N \cong \PG(n-1,q)$. Let $\cF_N$ be the set of cyclic flats of $N$ and $\cF_M$ be that of $M$. Let $P = \cl_N(L_0 \cup \{f\})$. Note that every pair of lines of $P$ intersect. It is easy to check the following claim:
	
	\begin{claim}\begin{align*}
			\cF_M = \ \  & \{F: F \in \cF_N, |F \cap P| \le 1\} \\
			 \cup & \{F \cup X:  F \in \cF_N, F \cap P = \{f\}\} \\
			 \cup & \{F \cup \{x_{L}\}: F \in \cF_N, F \cap P = L \in \cL\} \\
			 \cup & \{F : F \in \cF_N, r_M(F \cap P) = 2, F \cap P \notin \cL\} \\
			 \cup & \{F \cup X: F \in \cF_N, P \subseteq F\}  .\end{align*} 
	\end{claim}
		
	Since a matroid is determined by its collection of cyclic flats, the matroid $\tilde{M}(\ol{A})$ is therefore determined, for a given $n$ and $q$, by the naming of elements in $X$ and the choice of $N,P$ and $f$. There is only one choice for $N$ up to isomorphism, and the lemma now follows from the fact that the $\aut(\PG(n-1,q))$ acts transitively on pairs $(P,f)$, where $P$ is a plane containing $f$. 
\end{proof}

	We write $\olpg(n-1,q)$ for any matroid isomorphic to $M(\overline{A})$. Note that $M = \olpg(n-1,q)$ arises from $N = \PG(n-1,q)$ by adding $q^2$ new points on a line, spanned by a plane $P$ of $M$ and spanning a single point of $P$. The following is immediate from the definition and the previous lemma. 
			
\begin{lemma}\label{lregular}
	The matroid $\olpg(n-1,q)$ is $\GF(q)$-regular. 
\end{lemma}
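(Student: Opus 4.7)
The plan is to verify that the explicit construction of $\bar{A}$ defining $\olpg(n-1,q)$ can be carried out, unchanged, over any proper superfield $\bF$ of $\GF(q)$, starting from a fixed $A \in \cPG(n-1,q)$ viewed as an $\bF$-matrix. The preceding lemma guarantees that the resulting $\tilde{M}(\bar{A})$ is isomorphic to $\olpg(n-1,q)$ regardless of the choices involved, so a successful execution of the construction over $\bF$ immediately yields an $\bF$-representation of $\olpg(n-1,q)$, proving $\GF(q)$-regularity.

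I would first record that linear dependencies among columns of a $\GF(q)$-matrix are the same over any extension field, so over $\bF$ we still have $\tilde{M}(A) \cong \PG(n-1,q)$ and $\dim_{\bF} \col_{\bF}(A[F]) = r_N(F)$ for every flat $F$ of $N$. The only step in the construction whose validity is at all in doubt is the choice of a vector $v \in \col_{\bF}(A[L_0])$ not parallel to any column of $A[L_0]$. The space $\col_{\bF}(A[L_0])$ is two-dimensional over $\bF$ and therefore contains $|\bF|+1$ one-dimensional $\bF$-subspaces; at most $q+1$ of these are spanned by columns of $A[L_0]$. Since $\bF$ is a \emph{proper} superfield of $\GF(q)$ we have $|\bF|+1 \ge q+2$, so at least one direction $v$ is available.

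Next I would check that for each line $L \in \cL$ of the plane $P = \cl_N(L_0 \cup \{f\})$, the intersection $\col_{\bF}(A[L]) \cap \col_{\bF}(v \mid A[f])$ is exactly one-dimensional, so that the required nonzero vector $v_L$ may be chosen. Both subspaces are two-dimensional inside the three-dimensional $\bF$-space $\col_{\bF}(A[P])$, so by the modular law in a vector space their intersection has dimension at least $1$. The intersection cannot be two-dimensional, for then the two spaces would coincide and $A[f]$ would lie in $\col_{\bF}(A[L])$, contradicting $r_N(L \cup \{f\}) = 3$. Having chosen $v_L$ for each $L \in \cL$, the matrix $\bar{A} \in \bF^{[n] \times (E(N) \cup X)}$ is well-defined over $\bF$, and the preceding lemma gives $\tilde{M}(\bar{A}) \cong \olpg(n-1,q)$.

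The only genuinely delicate point is the counting argument producing $v$, which is exactly where the hypothesis $\bF \supsetneq \GF(q)$ is used; every other step is elementary linear algebra inside the fixed three-dimensional $\bF$-subspace spanned by the plane $P$. I expect no real obstacle beyond being careful to confirm that these intersections and ranks behave as they did in the defining construction over a specific extension field.
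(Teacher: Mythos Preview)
Your proposal is correct and follows exactly the approach the paper intends: the paper states that the lemma ``is immediate from the definition and the previous lemma,'' and your argument spells out precisely why---the construction of $\overline{A}$ goes through over any proper superfield $\bF$ of $\GF(q)$ (the properness being used only to guarantee the existence of $v$), and the previous lemma then certifies that the resulting matroid is isomorphic to $\olpg(n-1,q)$. You have simply made explicit what the paper leaves implicit.
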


We now turn to our second class, which is simpler to analyse. Let $\bF$ be a field with a $\GF(q)$-subfield and let $n \ge 2$. Let $B \in \cPG(n,q)$ and $N = \tilde{M}(B)$. Let $L_0$ be a line of $N$ and $v \in \col_{\bF}(B[L_0])$ be a nonzero vector, not parallel to any column of $B[L_0]$. Let $e \notin E(N)$ and $B^+ \in \bF^{[n+1] \times (E(N) \cup \{e\})}$ be such that $B^+[E(N)] = B$ and $B^+[e] = v$. 

By modularity of $N$, the matroid $\tilde{M}(B^+)$ is isomorphic to the principal extension of $L_0$ in $N$ by the element $e$, and is therefore determined up to isomorphism by $n$ and $q$ (due to transitivity of $\aut(\PG(n,q))$ on its set of lines). We write $\hpg(n-1,q)$ for any matroid isomorphic to the rank-$n$ matroid $\si(\tilde{M}(B^+) \con e)$. The following is clear by construction:

\begin{lemma}\label{hregular}
	The matroid $\hpg(n-1,q)$ is $\GF(q)$-regular. 
\end{lemma}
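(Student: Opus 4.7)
The plan is to observe that the construction given in the paragraph preceding Lemma \ref{hregular} goes through verbatim whenever the base field $\bF$ properly contains $\GF(q)$, producing an explicit $\bF$-representation of $\hpg(n-1,q)$. So I fix any such $\bF$, take $B \in \cPG(n,q)$ as a $\GF(q)$-matrix (viewed as an $\bF$-matrix), and note that $\col_\bF(B[L_0])$ is a $2$-dimensional $\bF$-subspace. It contains $|\bF|+1 \ge q+2$ distinct $1$-dimensional subspaces, whereas the columns of $B[L_0]$ span only $q+1$ of them; hence I can choose a nonzero $v \in \col_\bF(B[L_0])$ not parallel to any column of $B[L_0]$, and form the $\bF$-matrix $B^+$ exactly as in the setup.

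By the uniqueness discussion sketched just before Lemma \ref{hregular} (the matroid $\tilde{M}(B^+)$ is the principal extension of $L_0$ in $N = \PG(n,q)$ by $e$, and is therefore determined up to isomorphism by $n$ and $q$ via transitivity of $\aut(\PG(n,q))$ on its lines), the abstract matroid $\si(\tilde{M}(B^+) \con e)$ is isomorphic to $\hpg(n-1,q)$. Since $B^+$ is an $\bF$-matrix, $\hpg(n-1,q)$ is $\bF$-representable, and since $\bF$ was an arbitrary proper superfield of $\GF(q)$, this gives the conclusion.

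There is essentially no obstacle: the content is the elementary counting argument that a valid $v$ exists in $\col_\bF(B[L_0])$ whenever $|\bF| > q$, together with invoking the uniqueness statement already established. I would simply present the two-line argument without further elaboration, matching the author's remark that the claim is ``clear by construction.''
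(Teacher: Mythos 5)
Your proposal is correct and is essentially the argument the paper intends when it says the lemma is ``clear by construction'': the construction works over any field $\bF$ properly containing $\GF(q)$ because the projective line $\col_\bF(B[L_0])$ then has more than $q+1$ points, and the uniqueness-up-to-isomorphism statement guarantees the resulting matroid is $\hpg(n-1,q)$ regardless of $\bF$. The only detail worth keeping explicit is the existence of $v$, which you handle with the correct counting.
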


While we have specified these matroids abstractly to emphasise their $\GF(q)$-regularity and the fact that they are well-defined, we will only be interested in their $\GF(q^2)$-representations. We first consider $\bpg(n-1,q)$. The line $X$ we add is a $U_{2,q^2+1}$-restriction spanned by an element $f$ of $N$, together with an element $x_{L_0}$ that is spanned by $L_0$ but not contained in $L_0$. Since there are at most $q^2+1$ points on every line in $\PG(n-1,q^2)$, there is only one way to add the points in $X$ given a choice of $f$ and $x_{L_0}$. By choosing a basis for $\GF(q^2)^n$ in which $L_0$ and $f$ correspond to the first three standard basis vectors, we see that $\bpg(n-1,q)$ has the following as a representation: 
\begin{align*}
 \ol{A}(n-1,q) = \kbordermatrix{
 	& x_{L_0} & X - \{x_{L_0}\} & E(N) \\
	& 	1	  &    	\alpha 		  &     \          \\
	&  \omega & 		\omega \alpha 		  & 	   \			 \\
	&   0     &      1         &     A          \\
	&   0     &     0 		  &    \           \\
	&   \vdots     &     \vdots  & & \  \\
	},
\end{align*}
where $\alpha$ ranges over $\GF(q^2)-\{0\}$, and $A \in \cPG(n-1,q)$ is such that $A_f$ is the third standard basis vector. 

Now we consider $\hpg(n-1,q)$. Let $B \in \cPG(n,q)$ be a matrix containing among its columns the standard basis vectors $b_1, \dotsc, b_{n+1} \in \GF(q)^{n+1}$. If we choose $L_0$ to be the line spanned by $b_1$ and $b_2$ and $v$ to be the vector $b_1 - \omega b_2$, the matroid $\hpg(n-1,q)$, obtained by appending $v$ to $B$ and contracting the corresponding element, has the following representation: 
\[
	\widehat{A}(n-1,q) = \left(\begin{array}{ccccc}	 
	 (0 + 0\omega)\mathbf{j} & (1 + 0\omega) \mathbf{j} & \dotsc & (s+t \omega)\mathbf{j} & \dotsc \\
	    A     &    A    & \dotsc &      A    & \dotsc    \\
	\end{array}\right), 
\]
where $A \in \cPG(n-2,q)$, $\mathbf{j} = (1, \dotsc, 1)$  denotes the all-ones vector with $\frac{q^{n-1}-1}{q-1}$ entries, and $s$ and $t$ range over $\GF(q)$. Note that every vector in $\GF(q^2)^n$ with all but the first entry in $\GF(q)$ is parallel to a column of $\ha(n-1,q)$.

We have defined $\hpg(n-1,q)$ and $\olpg(n-1,q)$ abstractly, not as $\GF(q^2)$-represented matroids. When we refer to the associated $\GF(q^2)$-represented matroids we will write $M(\ha(n-1,q))$ and $M(\ba(n-1,q))$. 

\section{Non-examples}\label{nonexamples}

Let $\bF = \bF_0(\omega)$ be a degree-$2$ extension field of a field $\bF_0$. For a vector $w \in \bF^t$, we write $L(w)$ for the subspace $\spn_{\bF_0}(\{u,v\})$, where $u$ and $v$ are the unique $\bF_0$-vectors so that $w = u + \omega v$. Note that $L(w)$ has dimension $2$ if and only if $w$ is not parallel to an $\bF_0$-vector. 

We now define an important class of rank-$3$ represented matroids that will serve as obstructions to $\GF(q^2)$-regularity. Let $\cO(q)$ denote the set of $\GF(q^2)$-represented matroids $M$ such that $M \approx M(A \ |\ G_3)$, where the column set $X$ of $A$ has three elements, $G_3 \in \cPG(2,q)$, and $A \in \GF(q^2)^{[3] \times X}$ is a rank-$3$ matrix such that the three subspaces $L(A_x): x \in X$ each have dimension $2$ and together have trivial intersection. 

	More geometrically, if $M \in \cO(q)$ then $\tilde{M}$ is obtained by extending a projective plane $R$ over $\GF(q)$ by a three-element independent set $X$ so that $\tilde{M}$ is $\GF(q^2)$-representable and there is no point of $R$ common to the three lines of $R$ spanning the three points of $X$. 




\begin{lemma}\label{obad}
	If $M \in \cO(q)$, then $\tilde{M}$ is representable over a field $\bF$ if and only if $\bF$ has $\GF(q^2)$ as a subfield. 
\end{lemma}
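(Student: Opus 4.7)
The ``if'' direction is immediate: any $\GF(q^2)$-representation of $M$ yields an $\bF$-representation of $\tilde{M}$ whenever $\GF(q^2) \subseteq \bF$.

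For the converse, suppose $\tilde{M}$ is representable over $\bF$. Since $\tilde{M}$ has $\PG(2,q)$ as a restriction and projective geometries $\PG(n-1,q)$ with $n \ge 3$ are representable only over fields containing $\GF(q)$, we have $\GF(q) \subseteq \bF$. Lemma~\ref{confinement} then lets us assume, after a projective change of coordinates, that the $E(N)$-submatrix of the $\bF$-representation is the $\GF(q)$-matrix $G_3$. The defining condition of $\cO(q)$ ensures that the three shadow lines $\ell_i \subseteq N$ (the unique lines with $x_i \in \cl_{\tilde{M}}(\ell_i)$) form a triangle, so I would further change basis over $\GF(q)$ to place the three triangle vertices $\ell_i \cap \ell_j$ at the standard basis vectors of $\GF(q)^3$. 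The matroid conditions $a_i \in \spn_{\bF}(\ell_i)$ together with ``$x_i$ is not parallel to any point of $E(N)$'' (the abstract translation of $\dim L(A_{x_i}) = 2$) then force the $X$-columns of the $\bF$-representation to take the form
\[ a_1 = (1, \lambda_1, 0)^T, \quad a_2 = (1, 0, \lambda_2)^T, \quad a_3 = (0, 1, \lambda_3)^T \]
with each $\lambda_i \in \bF \setminus \GF(q)$.

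Applying Lemma~\ref{gfqvectorinplane} with $j=2, h=3, t=3$ to the original $\GF(q^2)$-representation of $M$, each two-dimensional subspace $\spn_{\GF(q^2)}(a_i^{(0)}, a_j^{(0)})$ contains a nonzero $\GF(q)$-vector, so each matroid-line $x_i x_j$ of $\tilde{M}$ contains at least one point of $E(N)$. Pick such points $p \in x_1 x_2$, $q \in x_2 x_3$, $r \in x_1 x_3$. The matroid-invariant incidences give $\det(a_1|a_2|p) = \det(a_2|a_3|q) = \det(a_1|a_3|r) = 0$ in the $\bF$-representation; eliminating $\lambda_2$ and $\lambda_3$ from these three equations yields a polynomial equation of degree at most $2$ in $\lambda_1$ with coefficients in $\GF(q)$. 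Because this elimination depends only on the $\GF(q)$-vectors $p, q, r$ and not on the ambient field, the identical polynomial is also satisfied by the corresponding coefficient $\lambda_1^{(0)} \in \GF(q^2) \setminus \GF(q)$ extracted from the original $\GF(q^2)$-representation. Provided the polynomial is nonzero, it is a polynomial over $\GF(q)$ of degree $\le 2$ whose root $\lambda_1^{(0)}$ has minimal polynomial of degree exactly $2$, so it must equal that minimal polynomial up to a scalar; its two roots lie in $\GF(q^2)$, and $\lambda_1 \in \bF$ being one of them with $\lambda_1 \notin \GF(q)$ gives $\GF(q^2) = \GF(q)(\lambda_1) \subseteq \bF$.

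The main obstacle is verifying that the eliminated polynomial is not identically zero. Direct calculation shows its leading coefficient vanishes precisely when $p$, $q$, and the triangle vertex $\ell_1 \cap \ell_3$ are collinear in $N$, a Menelaus-type degeneracy involving the fixed geometry. I would handle this case by either selecting a different representative from the (nonempty) set of $E(N)$-points on the line $x_i x_j$ when it contains more than one point, or by performing the symmetric elimination under a cyclic permutation of the indices $\{1,2,3\}$ and arguing that at least one of the three symmetric quadratics in $\lambda_1, \lambda_2, \lambda_3$ must be non-degenerate; in the latter case the same conclusion $\GF(q^2) \subseteq \bF$ follows because each $\lambda_i$ generates $\GF(q^2)$ over $\GF(q)$.
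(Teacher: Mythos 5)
Your setup (reduce to $\GF(q)\subseteq\bF$, apply Lemma~\ref{confinement} to fix the $\PG(2,q)$-part over $\GF(q)$, normalize the three $X$-columns to have one coordinate in $\bF\setminus\GF(q)$, and use Lemma~\ref{gfqvectorinplane} to locate the $\GF(q)$-points on the lines $x_ix_j$) matches the paper's, and the idea of extracting a degree-$\le 2$ polynomial over $\GF(q)$ satisfied by an element of $\bF\setminus\GF(q)$ is exactly the right engine. But the proof is incomplete at precisely the point you flag as ``the main obstacle'': you never establish that the eliminated quadratic is nonzero, and both of your proposed escapes fail or are unsubstantiated. The first escape (pick a different $E(N)$-point on the line $x_ix_j$) is unavailable: the intersection $\col(A[X-\{x_i\}])\cap\GF(q)^3$ has dimension exactly one, so the point $f_i$ on that line is \emph{unique} (the paper states and uses this uniqueness). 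The second escape (at least one of the three cyclically permuted quadratics is non-degenerate) is asserted, not proved, and the paper's own proof is evidence that it is not easy: there the vanishing of the quadratic is a genuinely consistent configuration of the free parameters $s_1,\dots,s_5$ (it forces $s_1=s_2=s_5=1$ and $s_3s_4=1$ but no contradiction yet), and the contradiction is only reached by feeding in two \emph{further} incidence relations, namely $r(\{x_1,x_3,f_2\})=r(\{x_1,x_2,f_3\})=2$, which pin down $\alpha_1$ and ultimately force $\alpha_2=(1+t)^{-1}\in\GF(q)$.

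The structural difference is worth internalizing: the paper does not try to prove non-degeneracy a priori. It assumes $\bF$ has no $\GF(q^2)$-subfield, observes that then no element of $\bF\setminus\GF(q)$ can be a root of a \emph{nonzero} quadratic over $\GF(q)$, concludes the quadratic is \emph{identically zero}, and converts the vanishing of each coefficient into new equations that, combined with the remaining matroid incidences, yield a contradiction. Your detour through the original $\GF(q^2)$-representation and the root $\lambda_1^{(0)}$ is also shakier than you suggest (the two coordinatizations of the $\PG(2,q)$-restriction need only agree up to a collineation, which can include a field automorphism of $\GF(q)$, so ``the identical polynomial'' requires justification), but that is repairable; the missing degenerate-case analysis is not, and it is the heart of the lemma.
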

\begin{proof}
	Let $M \in \cO(q)$ and $X,A,G_3$ be defined as above. Let $X = \{x_1,x_2,x_3\}$ and $R = M \del X$, noting that $\tilde{R} \cong \PG(2,q)$.  Each pair of subspaces in $\{L(A_x): x\in X\}$ meet in dimension $1$; let $e_i$ be the unique element of $E(R)$ so that $G_3[e_i] \in \cap_{j \in [3]-\{i\}}(L(x_j))$. Moreover by Lemma~\ref{gfqvectorinplane} each pair of columns of $A$ spans a nonzero $\GF(q)$-vector; for each $i \in [3]$ let $f_i$ be the unique element of $E(R)$ so that $G_3[f_i] \in \col(A[X-\{x_i\}])$. Note that $\tilde{M}$ is a simple rank-$3$ matroid, that $\tilde{R} \cong \PG(2,q)$, and that the subspaces $L(A_x): x \in X$ correspond to three lines $L_1,L_2,L_3$ of $\tilde{R}$ so that $x_i \in \cl_{\tilde{M}}(L_i)$ and $L_1 \cap L_2 \cap L_3 = \varnothing$. 	Further observe that if $i,j \in [3]$ and $i \ne j$, then $f_i \notin L_j$. 
	Since $\tilde{M}$ is $\GF(q^2)$-representable it is also representable over all fields with a $\GF(q^2)$-subfield, so it remains to show that $\tilde M$ is not representable over any other fields. 
	
	Let $\bF$ be a field over which $\tilde{M}$ is representable and assume for a contradiction that $\bF$ does not have a $\GF(q^2)$-subfield. Since $\tilde{R}$ is a minor of $\tilde{M}$ it follows that $\bF$ has $\GF(q)$ as a subfield. Let $P \in \bF^{[3] \times E(M)}$ be a $\bF$-representation of $\tilde{M}$; by Lemma~\ref{confinement} we may assume that $P[E(R)]$ is a $\GF(q)$-matrix and by applying further $\GF(q)$-row operations and $\GF(q^2)$-column scalings we may assume (using the fact that $f_i \notin L_j$ for $i \ne j$) that $P$ has the form 	
	\begin{align*}
 P = \kbordermatrix{
 	& e_1 & e_2 & e_3 & x_1       & x_2     & x_3       & f_1 & f_2 & f_3  &  & \\
	& 1   & 0   & 0   &  0        & \alpha_2 & \alpha_3 & s_1 & 1   & 1    & \ & \\
	& 0   & 1   & 0   &  1        & 0        & 1        & 1   & s_2 & s_4  & \dotsc & \\
	& 0   & 0   & 1   &  \alpha_1 & 1        & 0        & 1   & s_3 & s_5  & \ & \\
	},
	\end{align*}
where $\alpha_i \in \bF - \GF(q)$ for each $i \in [3]$, $s_1 \in \{0,1\}$ and $s_j \in \GF(q)$ for each $j \in [5]$. Since $r_{\tilde{M}}(x_2,x_3,f_1) = 2$, we have $\alpha_2 + \alpha_3 = s_1$. The lines $\cl_{\tilde{M}}(\{f_2,x_3\})$ and $\cl_{\tilde{M}}(\{x_3,f_2\})$ both intersect $L_2$ at $x_1$, so the vectors $(0,1-\alpha_3 s_2,-\alpha_3s_3)$ and $(0,-\alpha_2s_4,1-\alpha_2s_5)$ are both parallel to $(0,1,\alpha_1)$ and thus $\alpha_3s_3\alpha_2s_4 = (1-\alpha_3s_2)(1-\alpha_2s_5)$. Using $\alpha_3 = s_1 -\alpha_2$, we see that $\alpha_2$ is a zero of the function 
\[ p(z) = s_3s_4z(s_1-z) - (1-s_2(s_1-z))(1-s_5z).\]
Now $p(z)$ is a polynomial in $z$ with coefficients in $\GF(q)$ and degree at most $2$. However, $\alpha_2 \notin \GF(q)$ and, since $\bF$ has no $\GF(q^2)$-subfield, $\alpha_2$ is not a zero of an irreducible quadratic over $\GF(q)$. Therefore $p(z)$ is identically zero. We have $0 = p(0) = 1-s_1s_2$, so $s_1s_2 = 1$; since $s_1 \in \{0,1\}$ this gives $s_1 = s_2 = 1$. Similarly we have $0 = p(s_1) = s_5 - 1$, so $s_5 = 1$. Therefore $p(z) = z(1-z)(s_3s_4-1)$, so $s_3s_4 = 1$. 
Let $s_3 = t$ and $s_4 = t^{-1}$. Since $r_{\tilde{M}}(\{x_1,x_3,f_2\}) = r_{\tilde{M}}(\{x_1,x_2,f_3\}) = 2$, we have $\alpha_1 = t(1 + \alpha_2^{-1})$ and $\alpha_1 + (1-\alpha_2)^{-1} = t$. A computation gives $\alpha_2 = (1+t)^{-1}$, contradicting $\alpha_2 \notin \GF(q)$.
\end{proof} 

We now precisely determine the matrices $A$ which, when appended to a matrix in $\cPG(t-1,q)$, yield a matroid with no $\cO(q)$-minor; these matrices are all essentially restrictions of $\ha(t-1,q)$ and $\ba(t-1,q)$. We also give an alternative characterisation of these matrices in terms of the subspaces $L(x)$ defined as above. This is equivalent to a treatment of the special case of our main theorem where $M$ has a spanning projective geometry restriction.  

	\begin{lemma}\label{magic}
	Let $q$ be a prime power, $t \ge 3$ be an integer and $G_t \in \cPG(t-1,q)$. If $A \in \GF(q^2)^{[t] \times Y}$ and $M = M(A\ |\ G_t)$ then the following are equivalent:
	\begin{enumerate}
		\item\label{m1} $M$ has a minor in $\cO(q)$;
		\item\label{m2} $\si(M)$ is not projectively equivalent to a restriction of either $M(\ha(t-1,q))$ or $M(\ba(t-1,q))$;
		\item\label{m3} there exists a set $Z \subseteq Y$, independent in $M$, such that $|Z|\in \{2,3\}$ and the subspaces $L(A_z): z \in Z$ each have dimension $2$ and have trivial intersection.				
	\end{enumerate}
	Moreover, if $t \ge 5$ and (\ref{m3}) is satisfied by a set $Z$ of size $2$, then the matroid $M \con Z \del (Y-Z)$ also has a minor in $\cO(q)$. 
\end{lemma}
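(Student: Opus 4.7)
The plan is to prove the equivalences via the cycle (\ref{m1}) $\Rightarrow$ (\ref{m2}) $\Rightarrow$ (\ref{m3}) $\Rightarrow$ (\ref{m1}), and to establish the moreover claim along the way since it is invoked inside the last implication.

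For (\ref{m1}) $\Rightarrow$ (\ref{m2}), I would argue by contrapositive. If $\si(M)$ is projectively equivalent to a restriction of $M(\ha(t-1,q))$ or $M(\ba(t-1,q))$, then by Lemmas \ref{hregular} and \ref{lregular} every minor of $\tilde{M}$ is $\GF(q)$-regular. Members of $\cO(q)$ are representable only over fields containing $\GF(q^2)$ by Lemma \ref{obad}, hence fail $\GF(q)$-regularity (e.g., over $\GF(q^3)$), so $M$ can have no $\cO(q)$-minor.

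For (\ref{m2}) $\Rightarrow$ (\ref{m3}), again by contrapositive, I assume no such $Z$ exists. Columns with $\dim L(A_y) \leq 1$ are parallel to $\GF(q)$-vectors and so vanish in $\si(M)$; I work with $Y_{\mathrm{bad}} = \{y \in Y : \dim L(A_y) = 2\}$. After simplification any two elements of $Y_{\mathrm{bad}}$ are $M$-independent, so the failure of (\ref{m3}) yields $L(A_{y_1}) \cap L(A_{y_2}) \neq 0$ for every such pair. The heart of the argument is the classical projective-geometric dichotomy: a family of lines in $\PG(t-1,q)$ every two of which meet either shares a common point or lies in a common plane (the proof being that three distinct pairwise meeting lines with no common point lie in a plane $\pi$, and any further pairwise meeting line either lies in $\pi$ or meets $\pi$ in a unique point which must be common to all three). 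The triple hypothesis of (\ref{m3}) handles pathological configurations with repeated $L$-subspaces by ruling out triples like $L_1 = L_2$ together with some $L_3$ disjoint from $L_1$. In the common-point case, a $\GF(q)$-change of basis placing the common point at $e_1$ puts every $A_y$ into the column shape $(c, a_2, \ldots, a_t)$ with $c \in \GF(q^2)$ and $a_i \in \GF(q)$, matching $\ha(t-1,q)$; in the common-plane case, all bad columns lie on a single $U_{2,q^2+1}$-line of $\PG(t-1,q^2)$ spanned by a plane of $G_t$, matching $\ba(t-1,q)$.

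For (\ref{m3}) $\Rightarrow$ (\ref{m1}), set $L_i = L(A_{z_i})$. If $|Z|=3$ and every $L_i \cap L_j \neq 0$, then the classical dichotomy together with the trivial triple intersection forces $\dim(L_1 + L_2 + L_3) = 3$; letting $S \subseteq E(G_t)$ be the $\PG(2,q)$-restriction of $G_t$ on this plane, $M|(Z \cup S)$ has rank $3$ and directly fits the definition of $\cO(q)$. If $|Z|=3$ with some $L_i \cap L_j = 0$, apply the moreover to $\{z_i, z_j\}$; if $|Z|=2$, apply the moreover directly. For the moreover, with $t \geq 5$ and $L_1 \cap L_2 = 0$ (so $L_1 + L_2$ is $4$-dimensional), the matroid $N = M \con Z \del (Y - Z)$ is represented by the projection of $G_t$ into $\GF(q^2)^t / \spn_{\GF(q^2)}\{A_{z_1}, A_{z_2}\}$, of $\GF(q^2)$-dimension $t - 2 \geq 3$. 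I pick an independent $C \subseteq E(G_t)$ of size $t - 5$ with $\spn_{\GF(q)}(G_t[C]) \cap (L_1 + L_2) = 0$ (feasible because $L_1 + L_2$ has codimension $t - 4$ in $\GF(q)^t$) and contract $C$. In the resulting rank-$3$ quotient, the $\PG(2,q)$-restriction coming from $G_t$-projections can, by Lemma \ref{confinement}, be realised as a $\GF(q)$-matrix after a basis change; under this basis the image of $\GF(q)^t$ is a $5$-dimensional $\GF(q)$-subspace of the $6$-dimensional $\GF(q)$-space $\GF(q^2)^3$, strictly containing the $3$-dimensional $\GF(q)$-part. The excess two $\GF(q)$-dimensions of image then host projections of $G_t$-elements with non-trivial $\omega$-parts (bad columns); a dimension count extracts an independent triple among these whose $L$-subspaces are $2$-dimensional with trivial common intersection, giving the required $\cO(q)$-minor.

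The main obstacles are the dichotomy in (\ref{m2}) $\Rightarrow$ (\ref{m3}), where one must control configurations in which several columns share a common $L$-subspace (since equal $L$-subspaces do not force parallel columns), and executing the moreover cleanly: verifying that the three bad $G_t$-projections found in the final quotient are $\GF(q^2)$-independent, have $2$-dimensional $L$-subspaces in the chosen basis, and have trivial triple $L$-intersection.
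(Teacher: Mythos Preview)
Your overall cycle and the (\ref{m1}) $\Rightarrow$ (\ref{m2}) step match the paper. There are two genuine gaps.

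First, in (\ref{m3}) $\Rightarrow$ (\ref{m1}) you handle $|Z|=2$ (and the $|Z|=3$ case with a skew pair) by invoking the ``moreover'' clause, but that clause requires $t \ge 5$. When $t = 4$ and $L(A_{z_1}) \cap L(A_{z_2}) = 0$ --- which does occur, since two $2$-dimensional subspaces of $\GF(q)^4$ can be complementary --- your argument yields nothing. The paper does \emph{not} route (\ref{m3}) $\Rightarrow$ (\ref{m1}) through the moreover: for $|Z|=2$ it works inside the $\PG(3,q)$ on $L(A_{z_1}) + L(A_{z_2})$, contracts only $z_1$, and exhibits three explicit columns of the resulting rank-$3$ matroid (namely the images of $z_2$ and of two $\GF(q)$-columns) that, together with the surviving $\PG(2,q)$, give a member of $\cO(q)$. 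The moreover is then a short add-on for $t \ge 5$, not the engine of the implication.

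Second, in (\ref{m2}) $\Rightarrow$ (\ref{m3}) your common-plane conclusion is too quick. From ``every two $L(A_y)$ meet'' the dichotomy gives, in the absence of a common point, a common plane $\pi$; hence all $A_y$ lie in $\spn_{\GF(q^2)}(\pi)$, but this only says $r_M(Y_{\mathrm{bad}}) \le 3$, not that the bad columns lie on a single $\GF(q^2)$-line. When $r_M(Y_{\mathrm{bad}}) = 3$ you still have to produce a $Z$ satisfying (\ref{m3}): take a basis $\{z_1,z_2,z_3\}$; if $L_1 \cap L_2 \cap L_3$ is trivial you are done, and otherwise pick a fourth $L_4$ missing that common point (such $L_4$ exists precisely because there is no global common point) and argue that one of $\{z_1,z_2,z_4\}$, $\{z_1,z_3,z_4\}$ is independent with trivial $L$-intersection. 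Only after $r_M(Y_{\mathrm{bad}}) \le 2$ is established does Lemma~\ref{gfqvectorinplane} place the columns on a line through a $\GF(q)$-point, and even then the outcome may be $\ha$ rather than $\ba$.

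Your sketch of the moreover via contracting a generic $C$ of size $t-5$ is plausible but thin: you assert a $\PG(2,q)$-restriction in $N/C$ without locating one, and ``a dimension count extracts an independent triple'' needs the actual columns and a check of $\GF(q^2)$-independence and $L$-structure. The paper avoids this by first restricting to the $\PG(4,q)$ on $L_1 + L_2 + \spn(b_5)$ and then contracting $\{z_1,z_2\}$, which keeps the construction parallel to the $|Z|=2$ case already done explicitly.
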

	We call a matrix $A$ satisfying the conditions in this lemma \emph{$q$-bad} and if (\ref{m3}) holds with $|Z| = 2$ we call $A$ \emph{strongly $q$-bad}. Note that property (\ref{m3}), and therefore (strong) $q$-badness, is invariant under $\GF(q)$-row equivalence.

\begin{proof}[Proof of Lemma~\ref{magic}:]
	 Let $b_1, \dotsc, b_t$ be the standard basis vectors of $\GF(q)^t$.  We showed in Lemmas~\ref{lregular} and~\ref{hregular} that $\hpg(n-1,q)$ and $\olpg(n-1,q)$ are $\GF(q)$-regular and in Lemma~\ref{obad} that the matroids in $\cO(q)$ are not, so (\ref{m1}) implies (\ref{m2}). 
	
	Suppose that (\ref{m2}) holds. Note that (\ref{m3}) and its negation are invariant under $\GF(q)$-row-equivalence. Let $Y' = \{y \in Y, \dim(L(A_y)) = 2\}$ and $\cL = \{L(A_y): y \in Y'\}$, noting that every $y \in Y - Y'$ is a loop or is parallel to some column of $G_t$, so $\si(M \del (Y-Y')) \cong \si(M)$. If there exist $z_1,z_2 \in Y'$ such that $L(A_{z_1})$ and $L(A_{z_2})$ are skew then $Z = \{z_1,z_2\}$ satisfies (\ref{m3}), so we may assume that $Y'$ contains no such pair.
	
	If all subspaces in $\cL$ have a dimension-$1$ subspace in common, then, by applying $\GF(q)$-row-operations, we may assume that this subspace is $\spn_{\GF(q)}(b_1)$. This gives a matrix representation of $\si(M)$  that is, up to column scaling, a submatrix of $\ha(t-1,q)$, contradicting (\ref{m2}). We may therefore assume that $\bigcap \cL$ is trivial. 
		
	Therefore no pair of subspaces in $\cL$ are orthogonal but there is no dimension-$1$ subspace common to all subspaces in $\cL$. It follows routinely that there is some dimension-$3$ subspace $P$ of $\GF(q)^t$ containing all subspaces in $\cL$, so $r_M(Y') \le 3$. 
	
	If $r_M(Y') \le 2$ then there is a dimension-$2$ subspace $L_0$ of $\spn_{\GF(q^2)}(P)$ containing $A[Y']$. By Lemma~\ref{gfqvectorinplane}, $L_0$ contains a nonzero $\GF(q)$-vector $v$. Let $\{v,w\}$ be a basis for $L_0$. After $\GF(q)$-row-operations we may assume that $\{b_1,b_2,b_3\}$ is a basis for $P$, that $v = b_3$, and that $w \in \cl_{\GF(q^2)}(\{b_1,b_2\})-\cl_{\GF(q^2)}(b_2)$. Moreover, after row-scalings over $\GF(q^2)$ we may assume that either $w = b_1$ or $w = b_1 + \omega b_2$. Since $r_M(Y') = 2$ it follows that $\si(M)$ is projectively equivalent to a restricition of $\ha(t-1,q)$ or $\ba(t-1,q)$, contradicting (\ref{m2}). 
	
	If $r_M(Y') = 3$ then let $Z = \{z_1,z_2,z_3\}$ be a basis for $Y'$. Let $L_i = L(A_{z_i})$ for each $i \in \{1,2,3\}$. Since $r_M(Z) = 3$, the lines $L_1,L_2,L_3$ are not all equal, so we may assume that $L_1 \notin \{L_2,L_3\}$. If $L_1,L_2,L_3$ have no dimension-$1$ subspace in common then (\ref{m3}) holds, so we may assume that $L_1 \cap L_2 \cap L_3$ has dimension $1$. Moreover we know that there is some other subspace $L_4 = L(A_{z_4}) \in \cL$ not containing $L_1 \cap L_2 \cap L_3$, as $\bigcap \cL$ is trivial. Now $L_1 \cap L_2 \cap L_4$ and $L_1 \cap L_3 \cap L_4$ are both trivial, and either $\{z_1,z_2,z_4\}$ or $\{z_1,z_3,z_4\}$ has rank $3$ in $M$. Therefore (\ref{m3}) holds. 
		
	Finally, suppose that (\ref{m3}) holds. If $|Z| = 2$ then let $Z = \{z_1,z_2\}$. By applying $\GF(q)$-row-operations if necessary we may assume that $L(z_1) = \spn_{\GF(q)}(\{b_1,b_2\})$ and $L(z_2) = \spn_{\GF(q)}(\{b_3,b_4\})$. Let $X$ be the set of columns of $G_t$ contained in $\spn_{\GF(q)}(L(z_1) \cup L(z_2))$ and $N = M|(X \cup \{z_1,z_2\})$. We have  
	\begin{align*}
	 N \approx M \kbordermatrix{
 	&  z_1 & z_2 & X \\
	&  1   & 0   &   & \\
	& \alpha_1 & 0 & \dotsc  \\
	& 0 & 1 &       \dotsc  \\
	& 0 & \alpha_2 &   \\
	},
	\end{align*}
	for some $\alpha_1, \alpha_2 \in \GF(q^2)-\GF(q)$, where the matrix contains exactly one column from each parallel class in $\GF(q)^4$. Therefore, $N \con z_1$ is represented by a matrix having a submatrix containing as columns at least one nonzero vector from each parallel class of $\GF(q)^3$, as well as columns parallel to $(0,1,\alpha_2)^T,(-\alpha_1,1,0)^T$ and $ (-\alpha_1,0,1)^T$. Restricting $N \con z_1$ to this submatrix yields a matroid in $\cO(q)$. Moreover, if $t \ge 5$ then let $X'$ be the set of columns of $t$ contained in $\spn_{\GF(q)}(L(z_1) \cup L(z_2) \cup \{t_5\})$ and let $N' = M|(X' \cup \{z_1,z_2\})$. It is easy to see by a similar argument to the above that $N' \con \{z_1,z_2\}$, which is a restriction of $M \con Z \del (Y-Z)$, has a spanning restriction in $\cO(q)$. 
	
	If (\ref{m3}) holds for some $Z$ of size $3$ but for no $2$-element subset of $Z$, then $Z$ contains three dimension-$2$ subspaces, all contained in a common dimension-$3$ subspace, with trivial intersection. This dimension-$3$ subspace corresponds to a plane $P$ of the spanning $\PG(t-1,q)$-restriction of $M$, and clearly $M|(P \cup Z) \in \cO(q)$.  
\end{proof}

	\section{Tangles}\label{tanglesec1}
	
	Our tool for constructing minors in $\cO(q)$ given a projective geometry minor (rather than a spanning restriction as in Lemma~\ref{magic}) is the \emph{tangle}. Tangles were introduced for graphs, and implicitly for matroids, by Robertson and Seymour [\ref{gmx}] and were later extended explicitly to matroids [\ref{d95},\ref{ggrw06}]. The techniques in this section and the next follow [\ref{ggwnonprime}].
	
	Let $M$ be a matroid and let $\theta \in \bZ^+$. A set $X \subseteq E(M)$ is \emph{$k$-separating in $M$} if $\lambda_M(X) < k$. A collection $\cT$ of subsets of $E(M)$ is a \emph{tangle of order $\theta$} if 
	\begin{enumerate}
		\item Every set in $T$ is $(\theta-1)$-separating in $M$ and, for each $(\theta-1)$-separating set $X \subseteq E(M)$, either $X \in T$ or $E(M) - X \in \cT$; 
		\item if $A,B,C \in \cT$ then $A \cup B \cup C \ne E(M)$; and
		\item $E(M) - \{e\} \notin \cT$ for each $e \in E(M)$. 
	\end{enumerate}
	We refer to the sets in $\cT$ as \emph{$\cT$-small}. Given a tangle of order $\theta$ on a matroid $M$ and a set $X \subseteq E(M)$, we set $\kappa_{\cT}(X) = \theta - 1$ if $X$ is contained in no $\cT$-small set, and $\kappa_{\cT}(X) = \min\{\lambda_M(Z): X \subseteq Z \in \cT\}$ otherwise. The proof of our first lemma appears in [\ref{ggrw06}]:
	
	\begin{lemma}\label{tanglematroid}
		If $\cT$ is a tangle of order $\theta$ on a matroid $M$, then $\kappa_{\cT}$ is the rank function of a rank-$(\theta-1)$ matroid on $E(M)$. 
	\end{lemma}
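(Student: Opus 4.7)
The plan is to verify that $\kappa_{\cT}$ satisfies the standard rank axioms---normalisation, monotonicity with unit growth, and submodularity; the universal bound $\kappa_{\cT} \le \theta - 1$ together with $E(M) \notin \cT$ (immediate from axiom (2) applied to $A = B = C = E(M)$) then ensures the resulting matroid has rank exactly $\theta - 1$. Before addressing the axioms we establish a \emph{closure principle} for $\cT$: if $Z_1, Z_2 \in \cT$ and $T \in \{Z_1 \cup Z_2,\ Z_1 \cap Z_2\}$ is $(\theta - 1)$-separating, then $T \in \cT$. This is uniform: axiom (1) offers the dichotomy $T \in \cT$ or $E(M) - T \in \cT$, and in both cases a short set-theoretic calculation (using $E(M) - (Z_1 \cap Z_2) = (E(M) - Z_1) \cup (E(M) - Z_2)$) gives $Z_1 \cup Z_2 \cup (E(M) - T) = E(M)$, which axiom (2) forbids. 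The same template yields $\emptyset \in \cT$ and, where needed, $\{e\} \in \cT$ (using axiom (3) against $E(M) - \{e\}$), and then $Z \cup \{e\} \in \cT$ whenever $Z \in \cT$ and $Z \cup \{e\}$ is $(\theta - 1)$-separating.

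Given the closure principle, monotonicity follows directly from the definition, and unit growth $\kappa_{\cT}(X \cup \{e\}) \le \kappa_{\cT}(X) + 1$ holds trivially when no $\cT$-small set contains $X$; otherwise we take a minimiser $Z \supseteq X$ in $\cT$ and split on whether $Z \cup \{e\}$ is $(\theta - 1)$-separating. If it is, closure places $Z \cup \{e\} \in \cT$ and the standard bound $\lambda_M(Z \cup \{e\}) \le \lambda_M(Z) + 1$ delivers the inequality; otherwise $\lambda_M(Z \cup \{e\}) = \theta - 1$, which via the same bound forces $\kappa_{\cT}(X) \ge \theta - 2$, and then $\kappa_{\cT}(X \cup \{e\}) \le \theta - 1 \le \kappa_{\cT}(X) + 1$.

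The real content is submodularity. If one of $X, Y$---say $Y$---lies in no $\cT$-small set, then $\kappa_{\cT}(Y) = \kappa_{\cT}(X \cup Y) = \theta - 1$ and the inequality collapses to monotonicity $\kappa_{\cT}(X \cap Y) \le \kappa_{\cT}(X)$. Otherwise let $Z_X, Z_Y \in \cT$ be minimisers; submodularity of $\lambda_M$ gives
\[
  \lambda_M(Z_X \cup Z_Y) + \lambda_M(Z_X \cap Z_Y) \le \kappa_{\cT}(X) + \kappa_{\cT}(Y) \le 2(\theta - 2).
\]
If both terms on the left are at most $\theta - 2$, the closure principle places $Z_X \cup Z_Y$ and $Z_X \cap Z_Y$ in $\cT$ and the inequality transfers directly to $\kappa_{\cT}(X \cup Y) + \kappa_{\cT}(X \cap Y)$. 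Otherwise, by $\cup$--$\cap$ symmetry we may assume $\lambda_M(Z_X \cup Z_Y) \ge \theta - 1$; the displayed inequality then forces $\lambda_M(Z_X \cap Z_Y) \le \kappa_{\cT}(X) + \kappa_{\cT}(Y) - (\theta - 1) \le \theta - 3$, so closure puts $Z_X \cap Z_Y \in \cT$ and bounds $\kappa_{\cT}(X \cap Y)$ by the same quantity, while the universal bound $\kappa_{\cT}(X \cup Y) \le \theta - 1$ absorbs the remaining slack. We expect this subcase---where one side of the $\lambda_M$-submodular inequality exceeds $\theta - 2$---to be the main obstacle; the key point is that the slack transferred to the other side is large enough to guarantee it remains $\cT$-small, with axiom (2) doing the essential work throughout.
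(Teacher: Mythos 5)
Your proof is correct. Note that the paper does not prove Lemma~\ref{tanglematroid} at all --- it explicitly defers to [GGRW06] (``Obstructions to branch decomposition of matroids'') --- and your argument is essentially the standard one from that reference: verify the rank axioms directly, with the crux being that a union or intersection of two $\cT$-small sets which is still $(\theta-1)$-separating must itself be $\cT$-small (else axiom (2) is violated), so that the submodularity of $\lambda_M$ transfers to $\kappa_{\cT}$, the only delicate subcase being the one you isolate, where one of $\lambda_M(Z_X\cup Z_Y)$, $\lambda_M(Z_X\cap Z_Y)$ reaches $\theta-1$ and the slack $\kappa_{\cT}\le\theta-1$ absorbs it. The only point worth tightening is your parenthetical claim that $\{e\}\in\cT$: this needs $\lambda_M(\{e\})<\theta-1$, which is automatic for $\theta\ge 3$ (since $\lambda_M(\{e\})\le 1$) but not literally for $\theta=2$; in that edge case the one place you use it (showing $Z\cup\{e\}\in\cT$ when it is $(\theta-1)$-separating) still goes through, e.g.\ by applying your closure principle to $Z$ and $E(M)-(Z\cup\{e\})$ and invoking axiom (3) against their union $E(M)-\{e\}$.
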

	
	This matroid, which we denote $M(\cT)$, is the \emph{tangle matroid}. The next lemma is easily proved:
	
	\begin{lemma}\label{tangleminor}
		If $N$ is a minor of a matroid $M$ and $\cT_N$ is a tangle of order $\theta$ on $N$, then $\{X \subseteq E(M): \lambda_M(X) < \theta-1, X \cap E(N) \in \cT_N\}$ is a tangle of order $\theta$ on $M$.
	\end{lemma}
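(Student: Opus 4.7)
The plan is to verify the three tangle axioms for the family
\[
\cT_M = \{X \subseteq E(M) : \lambda_M(X) < \theta - 1,\ X \cap E(N) \in \cT_N\}
\]
directly from the corresponding axioms for $\cT_N$. The key (and only nontrivial) ingredient is the standard fact that if $N$ is a minor of $M$ and $X \subseteq E(M)$, then $\lambda_N(X \cap E(N)) \le \lambda_M(X)$. I would prove this by writing $N = M/C \setminus D$ and observing that neither contraction nor deletion can increase the local connectivity of any subset; alternatively, this inequality is a direct consequence of submodularity of $\lambda_M$ together with the standard rank formulas for minors.

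With this inequality in hand, the first axiom is immediate in one direction and nearly immediate in the other. Membership in $\cT_M$ gives $(\theta - 1)$-separation by definition. Conversely, given a $(\theta-1)$-separating $X \subseteq E(M)$, the trace $Y = X \cap E(N)$ satisfies $\lambda_N(Y) \le \lambda_M(X) < \theta - 1$, so $Y$ is $(\theta-1)$-separating in $N$; applying axiom~(1) for $\cT_N$ gives $Y \in \cT_N$ or $E(N) - Y \in \cT_N$. In the first case $X \in \cT_M$; in the second case $E(N) - Y = (E(M) - X) \cap E(N)$ and $\lambda_M(E(M) - X) = \lambda_M(X) < \theta - 1$, so $E(M) - X \in \cT_M$.

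Axioms (2) and (3) are then formal. If $A, B, C \in \cT_M$ covered $E(M)$, their traces $A \cap E(N), B \cap E(N), C \cap E(N)$ would lie in $\cT_N$ and cover $E(N)$, violating axiom~(2) for $\cT_N$. For axiom~(3), if $e \in E(N)$ then $(E(M) - \{e\}) \cap E(N) = E(N) - \{e\} \notin \cT_N$ by axiom~(3) for $\cT_N$; if $e \notin E(N)$ then the trace is all of $E(N)$, and $E(N) \notin \cT_N$ since otherwise axiom~(2) for $\cT_N$ would fail with $A = B = C = E(N)$.

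The main obstacle, such as it is, lies in the single inequality $\lambda_N(X \cap E(N)) \le \lambda_M(X)$; this is standard but deserves a careful one-line justification or citation. The rest of the argument is bookkeeping, taking advantage of the fact that $(E(M) - X) \cap E(N) = E(N) - (X \cap E(N))$ so that complementation in $M$ restricts to complementation in $N$.
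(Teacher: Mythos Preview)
Your verification is correct and is exactly the standard argument one would give. The paper does not actually supply a proof of this lemma; it simply remarks that it ``is easily proved'' and moves on. Your check of the three axioms, together with the monotonicity inequality $\lambda_N(X \cap E(N)) \le \lambda_M(X)$ for minors (which follows by iterating the one-element contraction and deletion cases via submodularity, as you indicate), is the intended routine verification.
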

	
	This tangle is the tangle on $M$ \emph{induced} by $\cT_N$. 
	
	If $M$ is a matroid and $k$ is an integer, then we write $\cT_k(M)$ for the collection of $(k-1)$-separating sets of $M$ that are neither spanning nor cospanning. For example, if $M \cong \PG(n-1,q)$ and $n \ge k$, then $\cT_k(M)$ is simply the collection of subsets of $E(M)$ of rank at most $k-2$. Since $3\frac{q^{n-2}-1}{q-1} < \frac{q^n-1}{q-1}$, no three such subsets have union $E(M)$, and we easily have the following:

	\begin{lemma}\label{pgtangle}
		If $q$ is a prime power, $n \in \bZ^+$, and $M \cong \PG(n-1,q)$, then $\cT_n(M)$ is a tangle of order $n$ in $M$. 
	\end{lemma}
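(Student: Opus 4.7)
The plan is to verify the three tangle axioms for $\cT_n(M)$ directly, leaning on the characterisation stated in the remark preceding the lemma: $\cT_n(M)$ is exactly the collection of subsets of $E(M)$ of rank at most $n-2$. The single geometric fact that drives essentially everything is that no two hyperplanes of a projective geometry cover its ground set, since $|H_1 \cup H_2| \le 2\tfrac{q^{n-1}-1}{q-1} - \tfrac{q^{n-2}-1}{q-1} < \tfrac{q^n-1}{q-1} = |E(M)|$.

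From this covering bound I would first deduce that for every $X \subseteq E(M)$ at least one of $X$, $E(M)-X$ is spanning; otherwise both would lie in hyperplanes and cover $E(M)$, contradicting the bound. Combined with $\lambda_M(X) = r_M(X) + r_M(E(M)-X) - n$, this shows that $X$ is $(n-1)$-separating if and only if $\min(r_M(X), r_M(E(M)-X)) \le n-2$, and that in this case the ``small'' side has rank at most $n-2$ and spanning complement. This simultaneously establishes that $\cT_n(M)$ coincides with the collection of rank-$\le (n-2)$ sets and verifies the first tangle axiom: a set in $\cT_n(M)$ is $(n-1)$-separating, and for any $(n-1)$-separating $X$ one of $X$, $E(M)-X$ has rank at most $n-2$ and hence lies in $\cT_n(M)$.

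The second axiom follows from the cardinality bound already quoted in the preamble: any $A \in \cT_n(M)$ is contained in a flat of rank at most $n-2$, so $|A| \le \tfrac{q^{n-2}-1}{q-1}$, and thus $|A \cup B \cup C| \le 3\tfrac{q^{n-2}-1}{q-1} < \tfrac{q^n-1}{q-1} = |E(M)|$ for any three $\cT_n$-small sets. The third axiom is immediate, since $M$ is a simple rank-$n$ matroid with $|E(M)| \ge n$, whence $r_M(E(M)-\{e\}) = n > n-2$ and $E(M)-\{e\} \notin \cT_n(M)$.

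There is no real obstacle: the proof is a verification. The only place that deserves care is the first axiom, where both the dichotomy between $X$ and $E(M)-X$ and the identification of $\cT_n(M)$ with the rank-$\le (n-2)$ subsets rely on the two-hyperplanes-do-not-cover fact. Once that is in hand, axioms 2 and 3 reduce to the explicit inequality from the preamble and a trivial rank computation.
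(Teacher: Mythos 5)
Your proposal is correct and follows the same route the paper sketches: identify $\cT_n(M)$ with the sets of rank at most $n-2$ via the fact that two hyperplanes of $\PG(n-1,q)$ cannot cover $E(M)$, and then verify the three axioms using the counting bound $3\tfrac{q^{n-2}-1}{q-1} < \tfrac{q^n-1}{q-1}$. The paper states this as ``easy'' without writing out the details; your write-up supplies exactly those details with no gaps.
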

	
	If $M$ is a matroid with a $\PG(n-1,q)$-minor $N$, then we write $\cT_n(M,N)$ for the tangle of order $n$ in $M$ induced by $\cT_n(N)$. 
	
	The next result is a slight variation of a lemma from [\ref{ggwnonprime}].
	
	\begin{lemma}\label{tanglecontract}
		Let $k \in \bZ^+$, let $M$ be a matroid and let $N$ be a minor of $M$ such that $\cT_k(N)$ is a tangle. If $X \subseteq E(M)$ is contained in a $\cT_k(M,N)$-small set, then there is a minor $M'$ of $M$ such that $M'|X = M|X$, $M'$ has $N$ as a minor, and $X$ is contained in a $\cT_k(M',N)$-small set $X'$ such that $E(M') = E(N) \cup X'$ and $\lambda_{M'}(X') = \kappa_{\cT_k(M',N)}(X) = \kappa_{\cT_k(M,N)}(X)$. 
	\end{lemma}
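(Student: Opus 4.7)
Choose $Y\supseteq X$ with $Y$ a $\cT_k(M,N)$-small set attaining $\lambda_M(Y)=\kappa_{\cT_k(M,N)}(X)$; existence follows from the hypothesis that $X$ lies in a $\cT_k(M,N)$-small set. Setting $W:=Y\cap E(N)$, the definition of the induced tangle gives $W\in\cT_k(N)$ and $\lambda_M(Y)<k-1$. The plan is to construct $M'$ as a minor of $M$ with ground set $E(N)\cup(Y-E(N))$, $M'|X=M|X$, and $N$ a minor, and then take the witness $X':=(E(M')-E(N))\cup W=Y$.

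Fix a representation $N=M/C\setminus D$ with disjoint $C,D\subseteq E(M)-E(N)$. The naive attempt is to contract $C-Y$ and delete $D-Y$, producing the minor $M_0=M/(C-Y)\setminus(D-Y)$ whose ground set is $E(N)\cup(Y-E(N))$ and in which $N=M_0/(C\cap Y)\setminus(D\cap Y)$ is still a minor. The trouble is that contracting elements of $C-Y$ that lie in $\cl_M(X)$ can alter $M|X$. To remedy this I would invoke Tutte's Linking Theorem (Theorem~\ref{linking}) applied to $M$ with $A=X\cup W$ and $B=E(N)-W$ (disjoint since $X\subseteq Y$). This produces a minor of $M$ on ground set $X\cup E(N)$ with $M'|A=M|A$ (so in particular $M'|X=M|X$) and $\lambda_{M'}(A)=\kappa_M(A,B)\leq\lambda_M(Y)$, the latter inequality witnessed by $Z=Y$ in the $\kappa$-minimisation since $A\subseteq Y$ and $Y\cap B=\varnothing$. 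A further combination with the naive $M_0$-style reduction handles the elements of $Y-X-E(N)$ to achieve the target ground set $E(N)\cup(Y-E(N))$.

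The main obstacle is ensuring that the resulting $M'$ still has $N$ as a minor, since Tutte's Linking Theorem preserves restrictions but not minor structure in general. Here the tangle hypothesis $W\in\cT_k(N)$ plays the decisive role: because $W$ is neither spanning nor cospanning in $N$, there is slack in the representation of $N$, and among the minors satisfying Tutte's conclusion one can be chosen in which $N$'s contraction/deletion set for the induced minor is routed through the retained elements of $Y-E(N)$ together with the preserved portion $E(N)-W$ of $E(N)$. This is the step that makes essential use of the techniques from [\ref{ggwnonprime}].

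Once $M'$ is constructed, setting $X'=Y$ gives $X'\cap E(N)=W\in\cT_k(N)$ and $\lambda_{M'}(X')\leq\lambda_M(Y)<k-1$, so $X'\in\cT_k(M',N)$ and $X\subseteq X'$, with $E(M')=E(N)\cup X'$. For the final equality $\lambda_{M'}(X')=\kappa_{\cT_k(M',N)}(X)=\kappa_{\cT_k(M,N)}(X)$, the direction $\leq$ is immediate from $\kappa_{\cT_k(M',N)}(X)\leq\lambda_{M'}(X')\leq\lambda_M(Y)=\kappa_{\cT_k(M,N)}(X)$; the direction $\geq$ follows from a lifting argument using Lemma~\ref{tangleminor}, observing that $\cT_k(M',N)$ and $\cT_k(M,N)$ are both induced from the single tangle $\cT_k(N)$, so that any $\cT_k(M',N)$-small set containing $X$ yields, via the standard extension by $E(M)-E(M')$, a $\cT_k(M,N)$-small set containing $X$ whose $\lambda_M$-value bounds $\lambda_{M'}$ from below by the required amount.
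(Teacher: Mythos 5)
There is a genuine gap, and it sits exactly where you flag it. The entire content of the lemma is to produce a minor that \emph{simultaneously} (i) agrees with $M$ on $X$, (ii) still has $N$ as a minor, and (iii) does not decrease the tangle-rank of $X$. Your plan delegates (i) and the connectivity bound to Tutte's Linking Theorem and then concedes that (ii) is ``the main obstacle,'' resolving it only by asserting that ``there is slack'' and that a suitable minor ``can be chosen,'' with a citation to unpublished techniques. Theorem~\ref{linking} gives no control whatsoever over which elements are contracted versus deleted outside $A\cup B$, so there is no reason the minor it produces has $N$ as a minor; and the follow-up ``combination with the naive $M_0$-style reduction'' to recover the elements of $Y-X-E(N)$ is likewise unargued. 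Separately, your ``direction $\geq$'' for $\kappa_{\cT_k(M',N)}(X)\ge\kappa_{\cT_k(M,N)}(X)$ is backwards: for a fixed set $Z\subseteq E(M')$ one has $\lambda_{M'}(Z)\le\lambda_M(Z)$, so a low-order separation of $X$ from $N$ in the minor $M'$ does \emph{not} lift to one of the same order in $M$. The tangle-rank of $X$ can genuinely drop when passing to minors; preventing that drop is a constraint that must be built into the choice of $M'$, not a consequence that can be read off afterwards.

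The paper sidesteps both problems with a different and more elementary argument: take $M'$ to be a \emph{minimal} minor of $M$ such that $N$ is a minor of $M'$, $M'|X=M|X$, and $r_{\cT_k(M',N)}(X)=b:=\kappa_{\cT_k(M,N)}(X)$ (so the non-dropping of the tangle-rank is imposed by fiat, and the family is nonempty since $M$ itself qualifies). Then set $X'=\cl_{M(\cT)}(X)$, the closure of $X$ in the tangle matroid of $\cT=\cT_k(M',N)$. If some $e\in E(M')-(X'\cup E(N))$ existed, then $e\notin\cl_{M'}(X)$, so removing $e$ (by contraction or deletion, whichever keeps $N$ as a minor) preserves $M|X$; minimality forces the tangle-rank of $X$ to drop in the smaller minor, and the witnessing small set $Z$ pulls back to show $e\in\cl_{M(\cT)}(X)=X'$, a contradiction. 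If you want to salvage your approach, you would need to make the ``routing'' of $N$'s contraction/deletion sets precise; the minimality argument avoids having to do so.
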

	\begin{proof}
		Let $b = r_{\cT_k(M,N)}(X)$ and let $M'$ be a minimal minor of $M$ such that $N$ is a minor of $M$, $M|X = M'|X$ and $r_{\cT_k(M',N)}(X) = b$. Let $\cT = \cT_k(M',N)$ and $X' = \cl_{M(\cT)}(X)$. It remains to show that $E(M') = X' \cup E(N)$. If not, there is some $e \in E(M') - X' \cup E(N)$. Since $\cl_{M'}(X) \subseteq X'$, we know that $M|X$ is a restriction of both $M \con e$ and $M \del e$. If $N$ is a minor of $M \con e$, and so by choice of $M$ we have $r_{\cT_k(M \con e,N)}(X) \le b-1$. Therefore there is some set $Z \in \cT_k(M \con e,N)$ such that $\lambda_{M' \con e}(Z) \le b-1$ and $X \subseteq Z$. Therefore $Z \cup \{e\} \in \cT$ and $\lambda_{M'}(Z \cup \{e\}) \le b$ so $r_{\cT}(X \cup \{e\}) = r_{\cT}(X)$ and $e \in \cl_{\cT}(X)$, a contradiction. The case where $N$ is a minor of $M \del e$ is similar. 
	\end{proof}
	
	\section{Using a Tangle}\label{tanglesec2}
	Our first lemma allows us to find an affine geometry restriction in a dense $\GF(q)$-representable matroid $M$ after contracting a subset of an arbitrary set of bounded size. A stronger qualitative version of this lemma (in which such a restriction is found in $M$ itself) follows from the density Hales-Jewett theorem [\ref{fk91}], but the proof of this result is much easier and we obtain a constructive bound.  
	
	\begin{lemma}\label{densegfq}
		Let $\alpha \in \bR^+$, $q$ be a prime power, and $n,h,k \in \bZ^+$ satisfy $n \ge (2+k)h + \log_q(2/\alpha)$ and $k \ge 2q^h(1/\alpha-1)$. If $M$ is a rank-$r$ $\GF(q)$-representable matroid with $r \ge n$ and $\elem(M) \ge \alpha|\PG(r-1,q)|$ then for each rank-$hk$ independent set $C$ in $M$, there exists $C' \subseteq C$ such that $M \con C'$ has an $\AG(h,q)$-restriction.
	\end{lemma}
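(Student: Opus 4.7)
The plan is to partition $C$ into $k$ rank-$h$ pieces $C_1, \ldots, C_k$ (which is possible because $C$ is independent of rank $hk$), set $D_j = C_1 \cup \cdots \cup C_j$ and $M_j = M/D_j$, and argue by contradiction: suppose no $M_j$ has an $\AG(h,q)$-restriction. Let $\alpha_j = \elem(M_j)/|\PG(r-jh-1,q)|$, so $\alpha_0 \ge \alpha$. The crux will be a density increment: for each $j < k$,
\[
\alpha_{j+1} \ge \alpha_j + \frac{\alpha}{2q^h}.
\]
Iterating $k$ times and using $k \ge 2q^h(1/\alpha - 1)$ gives $\alpha_k \ge 1$, which forces $\si(M_k) \cong \PG(r-kh-1,q)$; since $r - kh \ge 2h + \log_q(2/\alpha) \ge h+1$, that projective geometry contains an $\AG(h,q)$-restriction, a contradiction.

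To prove the density increment, I would consider the rank-$(h+1)$ flats $F$ of $M_j$ that contain $\cl(C_{j+1})$. Each such flat contributes exactly one non-loop parallel class to $M_{j+1} = M_j/C_{j+1}$, so $\elem(M_{j+1})$ counts these flats. Writing $b = |\si(M_j \mid \cl(C_{j+1}))|$ and $a_F = |\si(M_j|F)|$, the difference $a_F - b$ counts the simplified elements of $F$ lying outside $\cl(C_{j+1})$; these are exactly the affine points of the ambient $\PG(h,q)$ on $F$, relative to the hyperplane spanned by $\cl(C_{j+1})$. If $a_F - b = q^h$ for some $F$, then all $q^h$ affine points are present, and choosing one representative per parallel class yields a rank-$(h+1)$ restriction of $M_j$ isomorphic to $\AG(h,q)$. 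The failure assumption therefore forces $a_F - b \le q^h - 1$ for every such $F$; summing over $F$, and using $\sum_F (a_F - b) = \elem(M_j) - b$, gives $\elem(M_{j+1}) \ge (\elem(M_j) - b)/(q^h - 1)$.

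Converting this to densities via $|\PG(n,q)| = (q^{n+1}-1)/(q-1)$ and $b \le (q^h-1)/(q-1)$ is a short computation that yields
\[
\alpha_{j+1} \ge \alpha_j \cdot \frac{q^h}{q^h - 1} - \frac{1 - \alpha_j}{q^{r - (j+1)h} - 1}.
\]
The hypothesis $n \ge (2+k)h + \log_q(2/\alpha)$ ensures $q^{r - (j+1)h} \ge 2q^{2h}/\alpha$ whenever $j \le k-1$, so the correction term is at most $\alpha/q^{2h}$; using $\alpha_j \ge \alpha$ inductively, one concludes $\alpha_{j+1} - \alpha_j \ge \alpha/(q^h - 1) - \alpha/q^{2h}$, which is at least $\alpha/(2q^h)$ by an elementary manipulation. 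The main technical obstacle lies entirely in this bookkeeping of the lower-order corrections arising because $|\PG(n,q)| = (q^{n+1}-1)/(q-1)$ rather than $q^n$; the constants $2+k$ and $2q^h$ in the hypotheses are calibrated precisely so that the additive gain of $\alpha/(2q^h)$ survives each step and accumulates past $1 - \alpha$.
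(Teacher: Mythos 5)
Your proposal is correct and follows essentially the same route as the paper: partition $C$ into $k$ rank-$h$ blocks, contract them one at a time, count the rank-$(h+1)$ flats over $\cl(C_{j+1})$, use the absence of an $\AG(h,q)$-restriction to bound each flat's contribution by $q^h-1$, and show the resulting density increment cannot persist for $k$ steps given the hypotheses on $n$ and $k$. The only difference is bookkeeping — you track an additive gain of $\alpha/(2q^h)$ per step and land the contradiction at $\alpha_k \ge 1$, while the paper tracks a multiplicative factor $(1+\tfrac{1}{2}q^{-h})$ and localizes the contradiction at the first step where the increment fails — and both versions of the arithmetic check out.
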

	\begin{proof}
		Let $(C_1,C_2, \dotsc, C_k)$ be a partition of $C$ into sets of size $h$, and for each $i \in \{0, \dotsc, k\}$ let $M_i = M \con (C_1 \cup \dotsc \cup C_i)$ and $\delta_i = \elem(M_i)/|\PG(r(M_i)-1,q)|$, noting that $\delta_0 \ge \alpha$ and $\delta_i \le 1$ for each $i$. Let $x = \tfrac{1}{2}q^{-h}$ and let $j$ be maximal such that $j \le k$ and $\delta_j \ge \alpha(1+x)^j$. If $j = k$ then we have $\delta_k \ge \alpha(1+x)^k > \alpha(1+kx) \ge 1,$ a contradiction. Therefore $j < k$, and we have $\delta_j \ge \alpha(1+x)^j$ and $\delta_{j+1} < \alpha(1+x)^{j+1}$. 
		
		Let $F = \cl_{M_j}(C_{j+1})$ and $\cF$ be the collection of rank-$(h+1)$ flats of $M_j$ containing $F$; we have $\elem(M_{j+1}) = |\cF|$ and $\elem(M_j) = \elem(M_j|F) + \sum_{H \in \cF}(\elem(M_j|H)-\elem(M_j|F))$. We may assume that $M_j|H \not\cong \AG(h,q)$ for each $H \in \cF$, and therefore that $\elem(M_j|H) - \elem(M_j|F) < q^h$ for each $H \in \cF$. Let $r = r(M_j) = n-hk$. Now
		\begin{align*}
			\alpha(1+x)^j\frac{q^r-1}{q-1} & \le \elem(M_j) \\
										 & = \elem(M_j|F) + \sum_{H \in \cF}(\elem(M_j|H)-\elem(M_j|F))\\
										 & \le \frac{q^h-1}{q-1} + (q^h-1)\elem(M_{j+1})\\
										 & < \frac{q^h-1}{q-1} + \alpha(q^h-1)(1+x)^{j+1}\frac{q^{r-h}-1}{q-1}.
		\end{align*}
		Simplifying this inequality gives
		\[x(q^r-1) + \frac{q^h-1}{(1+x)^j\alpha} > (1+x)(q^h + q^{r-h}-2),\]
		and so, using $x > 0$ and $q^h \ge 2$, we have $xq^r + q^h/\alpha > q^{r-h}$. This implies that $q^r < 2q^{2h}/\alpha$, contradicting $r \ge 2h + \log_q(2/ \alpha)$. 
	\end{proof}
	
	We now combine the previous lemma and the machinery of tangles to show that, given a small restriction of $M$ with given `connectivity' to a large projective geometry minor of $M$, we can realise the same connectivity to a projective geometry restriction in a minor of $M$. The `qualitative' version of this lemma, on whose proof ours is based, will appear in [\ref{ggwnonprime}]. 
	
	\begin{lemma}\label{tanglecontractnumbers}
		Let $q$ be a prime power, let $h,a \in \bZ^+$ satisfy $a \le h$ and let $n = 2h(1+q^{h+a})+a+2$. If $M$ is a matroid with a $\PG(n-1,q)$-minor $N$ and $X\subseteq E(M)$ is a set such that $r_M(X) \le a$ and $M \del X$ is $\GF(q)$-representable, then there is a minor $M'$ of $M$ and a $\PG(h-1,q)$-restriction $N'$ of $M'$ such that $E(M') = E(N') \cup X$, $M'|X = M|X$ and $\lambda_{M'}(X) = \kappa_{\cT_k(M,N)}(X)$.
	\end{lemma}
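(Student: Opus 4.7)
The plan is to first apply Lemma~\ref{tanglecontract} to reduce to a controlled minor $M_1$, then exploit the density afforded by the $\PG(n-1,q)$-minor via Lemma~\ref{densegfq} to locate a $\PG(h-1,q)$-restriction in a further minor, and finally use Tutte's Linking Theorem to isolate this restriction together with $X$ while preserving the $\lambda$-value.

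First I would apply Lemma~\ref{tanglecontract} with $k=n$ to $(M,N,X)$, obtaining a minor $M_1$ of $M$ with $E(M_1) = E(N) \cup X_1$, where $X_1 \supseteq X$ is $\cT_n(M_1,N)$-small, $M_1|X = M|X$, $N$ is a minor of $M_1$, and $\lambda_{M_1}(X_1) = \kappa_{\cT_n(M,N)}(X) =: b \le a$; I may assume $X_1 \cap E(N) = \varnothing$. A short tangle argument gives $\kappa_{M_1}(X, E(N)) = b$: the upper bound is immediate from $\lambda_{M_1}(X_1) = b$, while for the lower bound, any $Z$ with $X \subseteq Z \subseteq X_1$ and $\lambda_{M_1}(Z) < b \le n-1$ would satisfy $Z \cap E(N) = \varnothing \in \cT_n(N)$ and hence lie in $\cT_n(M_1,N)$, contradicting $\kappa_{\cT_n(M_1,N)}(X) = b$.

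Since $M_1 \del X$ is $\GF(q)$-representable (as a minor of $M \del X$) and distinct $x,y \in E(N)$ satisfy $r_{M_1}(\{x,y\}) \ge r_N(\{x,y\}) = 2$, the restriction $M_1|E(N)$ is simple, giving $\elem(M_1 \del X) \ge (q^n-1)/(q-1)$. With some additional work bounding $r(M_1) \le n+a$ (by contracting those elements of the contraction part $C_N$ of a minor presentation $N = M_1/C_N \del D_N$ that lie outside $\cl_{M_1}(X)$, using $|C_N \cap \cl_{M_1}(X)| \le r_{M_1}(X) \le a$), the density of $M_1 \del X$ is at least $\sim q^{-a}$. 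I would then apply Lemma~\ref{densegfq} with $h'=h$, $\alpha \sim q^{-a}$ and $k \sim 2q^{h+a}$; the given value $n = 2h(1+q^{h+a})+a+2$ is exactly what makes the hypothesis $n \ge (2+k)h + \log_q(2/\alpha)$ hold. Taking a rank-$hk$ independent set $C$ in $M_1 \del X$ (disjoint from $X$), this yields $C' \subseteq C$ such that $(M_1 \del X)/C'$ has an $\AG(h,q)$-restriction $A$. Since $\AG(h,q)$ has rank $h+1$, contracting any $e \in A$ and deleting the parallel extras $P$ of $A/e$ produces a $\PG(h-1,q)$-restriction $N'$ of the minor $M_2 := M_1/(C' \cup \{e\}) \del P$ of $M_1$, and $X \subseteq E(M_2)$ since $C' \cup \{e\} \cup P$ is disjoint from $X$.

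Finally I would apply Tutte's Linking Theorem (Theorem~\ref{linking}) to $(M_2, X, E(N'))$ to obtain a minor $M'$ with $E(M') = X \cup E(N')$, $M'|X = M|X$, $M'|E(N') = N'$, and $\lambda_{M'}(X) = \kappa_{M_2}(X, E(N'))$. The main obstacle is verifying $\kappa_{M_2}(X, E(N')) = b$: the inequality $\le b$ follows because $X_1 \setminus (C' \cup \{e\} \cup P)$ separates $X$ from $E(N')$ in $M_2$ with $\lambda$-value at most $b$; the converse $\ge b$ requires arguing that any hypothetical $Z$ with $X \subseteq Z \subseteq E(M_2) \setminus E(N')$ and $\lambda_{M_2}(Z) < b$ lifts to a $\cT_n(M_1,N)$-small set in $M_1$ of $\lambda$-value below $b$, which would contradict $\kappa_{\cT_n(M,N)}(X) = b$. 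This lifting step requires careful tracking of how the contraction of $C' \cup \{e\}$ and deletion of $P$ interact with the tangle structure of $M_1$, and is the technically most delicate part of the argument.
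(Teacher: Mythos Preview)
Your high-level plan (Lemma~\ref{tanglecontract}, then Lemma~\ref{densegfq}, then Theorem~\ref{linking}, then contract to $\PG(h-1,q)$) matches the paper's, but the step you yourself flag as ``technically most delicate'' is a genuine gap, and your proposed lifting argument does not close it. You take an \emph{arbitrary} rank-$hk$ independent set $C$ in $M_1\del X$, contract some $C'\subseteq C$ (and then $e\in A$), and attempt to show $\kappa_{M_2}(X,E(N'))\ge b$ by lifting a hypothetical separation $Z$ of $M_2$ with $\lambda_{M_2}(Z)<b$ to a $\cT_n(M_1,N)$-small set of $\lambda$-value $<b$ in $M_1$. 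But $\lambda$ is monotone the wrong way for this: passing to a minor can only \emph{decrease} $\lambda$, so a small separation in $M_2$ need not come from one in $M_1$. Concretely, for a single element $e$ one has only $\lambda_{M_1}(Z\cup\{e\})\le\lambda_{M_1/e}(Z)+1$, so after contracting $|C'\cup\{e\}|$ arbitrary elements the tangle rank of $X$ can drop by that amount and there is nothing to contradict. A secondary issue: you only arrange $C\cap X=\varnothing$, not $C\cap\cl_{M_1}(X)=\varnothing$, so $(M_1/C')|X=M|X$ is also unjustified.

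The paper's missing idea is to choose $C$ \emph{greedily inside $E(N)$} so that the tangle survives every partial contraction. One takes $C$ maximal among independent subsets of $N\del(X\cap E(N))$ with $|C|\le hk$, $(M_1/C)|X=M_1|X$, and $\kappa_{\cT_{n-|C'|}(M_1/C',\,N/C')}(X)=b$ for \emph{every} $C'\subseteq C$; a short argument (any $e\notin\cl_{M(\cT)}(X)\cup\cl_{M_2}(X)$ can be appended) forces $|C|=hk$. Because $C\subseteq E(N)$ is $N$-independent, each $N/C'$ is again a projective geometry carrying its own tangle, and condition~(3) guarantees that after Lemma~\ref{densegfq} produces $C'\subseteq C$ and an $\AG(h,q)$-restriction $A$ of $(M_1|E(N))/C'$, both $X$ and $A$ have tangle rank $\ge b$ in $\cT'=\cT_{n-|C'|}(M_1/C',N/C')$ (the latter since $r_{N/C'}(A)\ge h+1>b$). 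Hence any separation of order $<b$ in $M_1/C'$ would have neither side $\cT'$-small, which is impossible; this yields $\kappa_{M_1/C'}(X,A)=b$ directly, and Theorem~\ref{linking} finishes. Building this tangle-preserving $C$ is the substantive content you are missing.
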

	\begin{proof}
		Let $k = 2q^{h+a}$ and $\alpha = (q^a + 1)^{-1}$, noting that $h,k,n$ and $\alpha$ satisfy the numerical conditions in Lemma~\ref{densegfq}. Let $b = \kappa_{\cT_n(M,N)}(X)$. By Lemma~\ref{tanglecontract} there is a minor $M_1$ of $M$ having $N$ as a minor and a $\cT_n(M_1,N)$-small set $X_1$ containing $X$ such that $E(M_1) = E(N) \cup X_1$ and $\lambda_{M_1}(X_1) = \kappa_{\cT_{n}(M_1,N)}(X) = b$.
		
		Note for each independent set $C$ of $N$ that $\cT_{n-|C|}(N\con C)$ is a tangle of order $n - |C|$ on $N \con C$. Let $C$ be a maximal independent set of $N \del (X \cap E(N))$ so that 
		\begin{enumerate}
			\item $|C| \le hk$,
			\item $M_1|X = (M_1 \con C)|X$, and
			\item \label{c3}$\kappa_{\cT_{n-|C'|}(M_1 \con C',N \con C')}(X) = b$ for all $C' \subseteq C$.
		\end{enumerate}
		Let $M_2 = M_1 \con C$, $N_2 = N \con C$, $\cT = \cT_{n-|C|}(M_2,N_2)$ and $X' = \cl_{M(\cT)}(X)$.
		\begin{claim}
			$|C| = hk$.
		\end{claim} 
		\begin{proof}[Proof of claim:]
			Suppose that $|C| \le hk-1$. Since $\kappa_{\cT}(X') = b \le n - hk < n - |C|$, we have $X' \in \cT$, so $E(N_2) - X'$ is spanning in $N_2$. Further note that $r_{M_2}(X) = a < n - |C|$; let $e \in E(N_2) - X' - \cl_{M_2}(X)$. By choice of $C$ and $e$, we may assume that $X$ has rank at most $b-1$ in $\cT_{n  - |C' \cup \{e\}|}(M_2 \con e,N_2 \con e)$ for some $C' \subseteq C$, so there is some set $Z$ such that $C' \cup \{e\} \subseteq Z$, $\lambda_{M_2 \con e}(Z) \le b-1$ and $Z \cap E(N_2 \con e)$ is not spanning in $N_2 \con e$. Therefore $(Z \cup e) \cap E(N_2)$ is not spanning in $N_2$ and $\lambda_{M_2}(Z \cup \{e\}) \le b$. It follows that $e \in \cl_{\cT}(X) = X'$, a contradiction. 			
		\end{proof}
		Since $X_1 \cap E(N)$ is not spanning in $N$ and $N$ is round, it follows that $r_{N}(X_1 \cap E(N)) = \lambda_{N}(X_1 \cap E(N)) \le \lambda_{M_1}(X_1) = b$. Therefore $n \le r(M_1|E(N)) \le n + b$. Now 
		\begin{align*}
			\elem(M_1 \del X_1) &\ge \frac{q^n-1}{q-1} - \frac{q^b-1}{q-1}\\ &\ge (q^b+1)^{-1}\frac{q^{n+b}-1}{q-1}\\ & \ge \alpha|\PG(r(M_1|E(N))-1,q)|.
		\end{align*}
	The matroid $M_1|E(N)$ is a minor of $M \del X$ and is therefore $\GF(q)$-representable. Moreover, $C$ is an $hk$-element independent subset of $E(N)$, so by Lemma~\ref{densegfq} there is a set $C' \subseteq C$ such that $(M_1|E(N)) \con C'$ has an $\AG(h,q)$-restriction $(M_1 \con C')|A$.  Let $\cT' = \cT_{n-|C'|}(M_1 \con C', N \con C')$. Now $N \con C'$ is $\GF(q)$-representable and $\elem((N \con C')|A) = q^h$, so $r_{(N \con C')|A} \ge h+1 > b$. Therefore $\kappa_{\cT'}(A) \ge \kappa_{\cT_{n-|C'|}(N \con C')}(A) \ge b$. It follows that $\kappa_{M_1 \con C'}(X,A) = b$, as otherwise $M_1 \con C'$ has a $b$-separation for which neither side is $\cT'$-small. 
	
	By Theorem~\ref{linking}, there is a minor $M_1'$ of $M_1 \con C'$ with $E(M_1') = X \cup A$, $M_1'|X = (M_1 \con C')|X = M|X$, $M_1'|A = (M_1 \con C')|A \cong \AG(h,q)$ and $\lambda_{M_1'}(X) = b$. Since $r(M_1'|A) =h+1 > b$, there is some $e \in A - \cl_{M_1'}(X)$. Contracting $e$ and simplifying yields the required minor $M'$. 
	\end{proof}
	
	Note in the above lemma that, in the special case where $M$ is round we have $\kappa_{\cT_k(M,N)}(X) = r_M(X)$; it follows that $N'$ is spanning in $M'$.

\section{Augmenting Structure}
	
	We now consider a matroid $M$ and an element $e \in E(M)$ such that $\si(M \con e)$ is a restriction of $\hpg(r(M)-2,q)$ or $\olpg(r(M)-2,q)$; we essentially argue that $M$ itself either has one of these two structures, or satisfies some constructive condition certifying otherwise. Unfortunately these hypotheses and outcomes are somewhat opaque in the two lemmas that follow; Theorem~\ref{maintech} will unify them. 
	
	We consider a slight variation of contraction in this section for ease of notation. If $e$ is a nonloop of a represented matroid $M$, then we let $M \dcon e$ denote the represented matroid $M' \con e'$, where $M'$ is obtained from $M$ by extending $e$ in parallel by an element $e'$. Thus, $e$ is a loop of $M \dcon e$, and we have $M \con e = (M \dcon e) \del e$ and $E(M \dcon e) = E(M)$. Note that if $M \dcon e \approx M(A)$ for some $\bF$-matrix $A$, then $M \approx M(A')$ for some matrix $A'$ obtained by appending a single row to $A$. 

	\begin{lemma}\label{haugment}
		Let $\bF = \bF_0(\omega)$ be a degree-$2$ extension field of a field $\bF_0$. Let $M$ be a vertically $5$-connected $\bF$-represented rank-$r$ matroid and $e$ be a nonloop of $M$ such that $M \dcon e \approx M\binom{u_0 + \omega v_0}{R}$ for some $u_0,v_0 \in \bF_0^{E(M)}$ and $R \in \bF_0^{[r-2] \times E(M)}$. Then there are matrices $P,Q \in \bF_0^{[2] \times E(M)}$ such that $M \approx M\binom{P + \omega Q}{R}$ and either
		\begin{enumerate}
			\item\label{a1} there is a partition $(I,J)$ of $E(M)$ such that \[\rank(R[I]) + \rank(Q[J]) \le 1,\] or 
			\item\label{a3} the matrix \vspace{-0.3cm}
			\[ W^+ = \kbordermatrix{
				& &\!\!\!S\!\!\!   &   & X & E(M) \\ 
				 \sqbr{2}& I_2  && 0 & -\omega I_2 & P\\
				 \sqbr{2}& 0   && I_2 & I_2 & Q\\
				 \sqbr{r-2}& 0   && 0   & 0   & R \\ }\]
			satisfies $\kappa_{M(W^+)}(S \cup X,K) = 4$ for every set $K \subseteq E(M)$ such that $r_M(K) \ge 4$. (Here $|S| = 4$ and $|X| = 2$.)

		\end{enumerate}
	\end{lemma}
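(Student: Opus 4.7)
The plan is to construct $P$ and $Q$ from the given representation of $M \dcon e$, use Lemma~\ref{sunday} to reduce $Q$ to minimal form, and then split into cases on the value of $\rho := \rank_{\bF_0}\left(\begin{smallmatrix}P \\ Q \\ R\end{smallmatrix}\right)$. Since $M \dcon e$ is obtained from $M$ by making $e$ a loop, a representation of $M$ arises from the given one by restoring a single row $w \in \bF^{E(M)}$ (with $w[e] \ne 0$) above the rows $u_0 + \omega v_0$ and $R$. Writing $w = p + \omega q$ with $p, q \in \bF_0^{E(M)}$ and setting $P = \binom{p}{u_0}$, $Q = \binom{q}{v_0}$ gives the required form $M \approx M\binom{P + \omega Q}{R}$. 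Since the $\bF$-rowspace of $\binom{P+\omega Q}{R}$ lies inside that of $\left(\begin{smallmatrix}P \\ Q \\ R\end{smallmatrix}\right)$, we have $r \le \rho \le r + 2$; applying Lemma~\ref{sunday} with $d = 2$, $m = r - 2$, and $h = r + 2 - \rho$ lets us modify $P$ and $Q$ (without changing $R$) so that $Q$ has $h$ zero rows.

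If $\rho \le r + 1$, then $Q$ has rank at most $1$, so condition~(\ref{a1}) holds via the trivial partition $(I, J) = (\varnothing, E(M))$: $\rank R[\varnothing] + \rank Q[E(M)] = 0 + \rank Q \le 1$.

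If $\rho = r + 2$, we verify condition~(\ref{a3}). A direct block-column computation on $W^+$ gives $r_{M(W^+)}(S \cup X) = 4$, $r_{M(W^+)}(E(M)) = \rho$, and $r(M(W^+)) = r + 2$, so $\lambda_{M(W^+)}(S \cup X) = \rho - r + 2 = 4$ and hence $\kappa_{M(W^+)}(S \cup X, K) \le 4$ for every $K$. For the matching lower bound, suppose for contradiction there is a set $Z$ with $S \cup X \subseteq Z \subseteq E(M(W^+)) - K$ and $\lambda_{M(W^+)}(Z) \le 3$; set $Z' = Z \cap E(M)$ and $K' = E(M) - Z'$. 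The same calculation yields
\[
\lambda_{M(W^+)}(Z) = 2 + \rank R[Z'] + \rank_{\bF_0}\left(\begin{smallmatrix}P \\ Q \\ R\end{smallmatrix}\right)[K'] - r.
\]
Using the bounds $r_M(Z') \le \rank R[Z'] + 2$ and $r_M(K') \le \rank_{\bF_0}\left(\begin{smallmatrix}P \\ Q \\ R\end{smallmatrix}\right)[K']$, the assumption $\lambda_{M(W^+)}(Z) \le 3$ transfers to $\lambda_M(Z') \le 3$. Since $r_M(K') \ge r_M(K) \ge 4$, vertical $5$-connectivity of $M$ forces $r_M(Z') \le 3$. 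One then finishes by exploiting the identity that $(P + \omega Q)[Z'] \in \rowspace_\bF R[Z']$ if and only if both $P[Z'], Q[Z'] \in \rowspace_{\bF_0} R[Z']$ (split the $\bF$-relation into its $1$- and $\omega$-components), which bounds $\rank_{\bF_0}\left(\begin{smallmatrix}P \\ Q \\ R\end{smallmatrix}\right)[Z']$ in terms of $r_M(Z')$; combining this with the submodular inequality $\rank_{\bF_0}\left(\begin{smallmatrix}P \\ Q \\ R\end{smallmatrix}\right)[Z'] + \rank_{\bF_0}\left(\begin{smallmatrix}P \\ Q \\ R\end{smallmatrix}\right)[K'] \ge \rho = r + 2$ produces a lower bound on $\rank_{\bF_0}\left(\begin{smallmatrix}P \\ Q \\ R\end{smallmatrix}\right)[K']$ that is incompatible with $\lambda_{M(W^+)}(Z) \le 3$.

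The delicate step is this last contradiction in the regime $r_M(Z') \le 3$, where $\lambda_M(Z') \le 3$ does not yield a vertical $4$-separation and vertical $5$-connectivity cannot be invoked directly. The resolution rests on the rigidity of the $\omega$-decomposition when $\rho = r + 2$, which prevents the extra $\bF_0$-rank of $\left(\begin{smallmatrix}P \\ Q \\ R\end{smallmatrix}\right)$ over $R$ from concentrating on a single side of the partition.
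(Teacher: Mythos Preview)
Your overall setup is sound and your case $\rho \le r+1$ is correct, but the case $\rho = r+2$ has a genuine gap, and in fact the implication you are trying to establish there is false.

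Concretely: take $Z' = \{a,b\}$ with $R[Z'] = 0$, and choose $P,Q$ so that one row of $Q$, say $Q_2$, satisfies $Q_2[K'] = 0$ but $Q_2[a] \ne 0$, while $P_1,P_2,Q_1$ are generic on $K'$. Then $\rho = r+2$ globally (all eight $\bF_0$-rows are independent), yet $\rho[K'] = r+1$, and your own formula gives
\[
\lambda_{M(W^+)}(S \cup X \cup Z') \;=\; 2 + \rank R[Z'] + \rho[K'] - r \;=\; 2 + 0 + (r+1) - r \;=\; 3,
\]
so (\ref{a3}) fails. One can arrange $M$ to be round (take $M|K'$ to contain a spanning $\PG(r-1,q)$), hence vertically $5$-connected. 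All the inequalities you derive in your last paragraph --- $\rho[Z'] + \rho[K'] \ge r+2$, $\rank R[Z'] + \rho[K'] \le r+1$, $r_M(Z') \le 3$ --- hold here simultaneously; there is no contradiction to extract. The vague appeal to ``rigidity of the $\omega$-decomposition'' does not rescue this.

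The paper's proof avoids this trap by reversing the order of operations. It does \emph{not} reduce $Q$ globally and then try to verify (\ref{a3}); instead it takes an arbitrary initial $P_1,Q_1$, assumes (\ref{a3}) fails with witnessing separator $Z$, sets $(I,J) = (Z \cap E(M),\, E(M)-Z)$, and only then applies Lemma~\ref{sunday} \emph{locally to the columns in $J$}. This produces new $P,Q$ with $\rank(Q[J]) = r_{M^+}(J) - r$, and the computation $\lambda_{M^+}(Z) = (4 + \rank R[I]) + (r + \rank Q[J]) - (r+2)$ immediately gives $\rank R[I] + \rank Q[J] \le 1$, which is (\ref{a1}). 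The point is that the choice of $P,Q$ for which (\ref{a1}) holds depends on the partition $(I,J)$, which in turn comes from the failure of (\ref{a3}); a single global reduction cannot anticipate this.
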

	\begin{proof}
		Since $M \dcon e \approx M\binom{u_0 + \omega v_0}{R}$, we have $M \approx M\binom{P_1 + \omega Q_1}{R}$ for some $P_1,Q_1 \in \bF_0^{[2] \times E(M)}$. Let $W^+$ be the matrix in (\ref{a3}) with $P,Q = P_1,Q_1$ and let $M^+ = M(W^+)$. Note that $M \approx M^+ \con X \del S$ and $r(M^+) = r+ 2$. If (\ref{a3}) does not hold for $P_1,Q_1$, then there are sets $Z,K \subseteq E(M^+)$ such that $r_M(K) \ge 4$,  with $S\cup X \subseteq Z \subseteq  E(M^+) - K$ and $\lambda_{M^+}(Z) \le 3$. Let $(I,J) = (E(M) \cap Z,E(M)-Z)$.
		
		Note that $r_{M^+}(Z) \ge r_{M^+}(S) = 4$. We have $\lambda_{M}(I) \le \lambda_{M^+}(Z) \le 3$, so vertical $5$-connectivity of $M$ gives $\min(r_M(I),r_M(J)) \le 3$. But $r_M(J) \ge r_M(K) \ge 4$, so $r_M(I) \le 3$. This gives  $r_{M^+}(Z) \le 5$ and, by vertical $5$-connectivity of $M$, $r_M(J) = r$. 
		
		Note that $0 \le r_{M^+}(J) - r_M(J) \le r(M^+)-r(M) = 2$. We have $r = r_M(J) = \rank\left(\binom{P_1+\omega Q_1}{R}[J]\right)$ and $r_{M^+}(J) = \rank(W^+[J])$. By Lemma~\ref{sunday}, $\binom{P_1+ \omega Q_1}{R}[J]$ is row-equivalent to a matrix $\binom{P' + \omega Q'}{R[J]}$, where \[\rank(Q') = \rank(W^+[J])-\rank\left(\tbinom{P_1 + \omega Q_1}{R}[J]\right) = r_{M^+}(J)-r.\] Therefore $\binom{P_1+ \omega Q_1}{R}$ is row-equivalent to a matrix $\binom{P + \omega Q}{R}$ where $Q[J] = Q'$. Now $M = M\binom{P + \omega Q}{R}$ and
		\begin{align*}
			3 & \ge \lambda_{M^+}(Z) \\
			  & = r_{M^+}(Z) + r_{M^+}(J) - r(M^+) \\
			  & = (4 + \rank(R[I])) + (r + \rank(Q')) - (r + 2),\\
			  & = 2 + \rank(R[I]) + \rank(Q[J]) 
		\end{align*}
		so $\rank(R[I]) + \rank(Q[J]) \le 1$. Therefore (\ref{a1}) holds. 
	\end{proof}
	
	\begin{lemma}\label{laugment}
		Let $\bF = \bF_0(\omega)$ be a degree-$2$ extension field of a field $\bF_0$. Let $M$ be a rank-$r$, vertically $9$-connected $\bF$-represented matroid and $e$ be a nonloop of $M$. If there are matrices $P_0,Q_0 \in \bF_0^{[2] \times E(M)}$ and $R \in \bF_0^{[r-3] \times E(M)}$ and a partition $(I_0,J_0)$ of $E(M)$ such that $M \dcon e \approx M\binom{P_0 + \omega Q_0}{R}$, $r_{M \dcon e}(I_0) \le 2$, $\rank(R[I_0]) \le 1$ and $Q_0[J_0] = 0$, then there are matrices $P,Q \in \bF_0^{[3] \times E(M)}$ such that $M \approx M\binom{P+ \omega Q}{R}$ and either
		\begin{enumerate}
			\item\label{b0}
				$M$ and $e$ satisfy the hypotheses of Lemma~\ref{haugment},
			\item\label{b1}
				there is a partition $(I,J)$ of $E(M)$ such that $Q[J] = 0$ and $r_M(I) \le 4$, or
			\item\label{b2}
			the matrix \vspace{-0.3cm}
			\[ W^+ = \kbordermatrix{
				& &\!\!\!S\!\!\!   &   & X & E(M) \\ 
				 \sqbr{3}& I_3  && 0 & -\omega I_3 & P\\
				 \sqbr{3}& 0   && I_3 & I_3 & Q\\
				 \sqbr{r-2}& 0   && 0   & 0   & R \\ }\]
			satisfies $\kappa_{M(W^+)}(S \cup X,K) \ge 5$ for each set $K \subseteq E(M)$ such that $r_M(K) \ge 5$. (Here $|S| = 6$ and $|X| = 3$.)
		\end{enumerate}
	\end{lemma}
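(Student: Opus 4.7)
The proof parallels that of Lemma~\ref{haugment}, with the rank-two $\omega$-block replaced by a rank-three one and with the additional possibility (item (\ref{b0})) that the representation of $M \dcon e$ can be compressed to use only a single $\omega$-row. First, lift the hypothesized representation of $M \dcon e$ to one of $M$: since $M$ and $M \dcon e$ differ by a single rank-one extension unlooping $e$, there exist matrices $P_1, Q_1 \in \bF_0^{[3] \times E(M)}$ whose first two rows coincide with $P_0, Q_0$ and whose third row accounts for $e$, so that $M \approx M\binom{P_1 + \omega Q_1}{R}$. Form the matrix $W^+$ of (\ref{b2}) with $P = P_1$, $Q = Q_1$, and set $M^+ = M(W^+)$; then $|S| = 6$, $|X| = 3$, $r(M^+) = r + 3$, $r_{M^+}(S \cup X) = 6$, and $M \approx M^+ \con X \del S$.

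Next, dispose of case (\ref{b0}). Applying Lemma~\ref{sunday0} to $\binom{P_0 + \omega Q_0}{R}$ with $d = 2$, $h = 1$ shows that this matrix is row-equivalent to one with at most a single $\omega$-row exactly when $\rank\binom{P_0}{Q_0} \le 3$. If this rank inequality holds, the extra $\bF_0$-row produced by Lemma~\ref{sunday0} can be absorbed into $R$ to give $M \dcon e \approx M\binom{u_0' + \omega v_0'}{R'}$ with $R' \in \bF_0^{[r-2] \times E(M)}$; since $M$ is vertically $5$-connected (being vertically $9$-connected), Lemma~\ref{haugment}'s hypotheses are satisfied and (\ref{b0}) holds. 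Otherwise $\rank\binom{P_0}{Q_0} = 4$, and the goal is (\ref{b1}) or (\ref{b2}).

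Assume (\ref{b2}) fails for $(P_1, Q_1)$: there exist $Z, K \subseteq E(M^+)$ with $S \cup X \subseteq Z \subseteq E(M^+) - K$, $r_M(K) \ge 5$, and $\lambda_{M^+}(Z) \le 4$. Let $I = E(M) \cap Z$ and $J = E(M) - Z$. The rank identity $r_{M^+}(Z) = r_{M^+}(S \cup X) + r_{M^+ \con (S \cup X)}(I) = 6 + \rank(R[I])$ together with $\lambda_{M^+}(Z) \le 4$ yields $r_{M^+}(J) + \rank(R[I]) \le r + 1$; meanwhile $\lambda_M(I) \le \lambda_{M^+}(Z) \le 4$ combined with vertical $5$-connectivity of $M$ and $r_M(J) \ge r_M(K) \ge 5$ forces $r_M(I) \le 4$ and $r_M(J) = r$. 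Apply Lemma~\ref{sunday} to $\binom{(P_1 + \omega Q_1)[J]}{R[J]}$ with $d = 3$, $m = r - 3$, and $h = r + 3 - r_{M^+}(J) \in \{2, 3\}$ to obtain matrices $P, Q \in \bF_0^{[3] \times E(M)}$ with $M \approx M\binom{P + \omega Q}{R}$, $\rank(Q[J]) = r_{M^+}(J) - r$, and hence $\rank(Q[J]) + \rank(R[I]) \le 1$. If $\rank(Q[J]) = 0$, the partition $(I, J)$ witnesses (\ref{b1}). Otherwise $\rank(Q[J]) = 1$ and $\rank(R[I]) = 0$, so $R[I] = 0$ and $r_M(I) \le 3$; a further $\bF_0$-row operation on the $\omega$-block concentrates the nonzero part of $Q[J]$ into a single row, and transferring the support of that row from $J$ into $I$ (using the hypotheses $Q_0[J_0] = 0$ and $\rank(R[I_0]) \le 1$ to keep the move compatible with the $\olpg$-type decomposition of $M \dcon e$) increases $r_M(I)$ by at most $1$, so the updated partition satisfies $Q[J] = 0$ and $r_M(I) \le 4$, giving (\ref{b1}).

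The main obstacle is this final reassignment: the rank inequality $\rank(Q[J]) + \rank(R[I]) \le 1$ does not force $Q[J] = 0$ directly, and promoting it to an exact equality requires carefully moving a bounded number of elements between $I$ and $J$ without violating $r_M(I) \le 4$ or the relevant separation constraints in $M^+$. The hypothesis of vertical $9$-connectivity --- strictly stronger than the vertical $5$-connectivity used in Lemma~\ref{haugment} --- is what supplies the extra margin needed to carry out this reassignment while preserving every connectivity estimate appearing in the proof.
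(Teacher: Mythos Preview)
Your overall architecture—lift to $M$, build $W^+$, suppose (\ref{b2}) fails, and invoke Lemma~\ref{sunday} on $J$—matches the paper, and your case $\rank(Q[J])=0$ correctly yields (\ref{b1}). The genuine gap is the final paragraph, where $\rank(Q[J])=1$ and $R[I]=0$.

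Your proposed fix there is to ``transfer the support of that row from $J$ into $I$'' so as to force $Q[J]=0$ while keeping $r_M(I)\le 4$. This cannot work as stated: the support of the single nonzero row of $Q[J]$ is an arbitrary subset of $J$, potentially all of it, so moving that support into $I$ could make $I=E(M)$. The parenthetical appeal to $Q_0[J_0]=0$ and $\rank(R[I_0])\le 1$ does not bound this support, and invoking vertical $9$-connectivity for ``extra margin'' is not an argument—nothing in the proof uses a separation of order $>5$ at this point. In short, you are trying to reach (\ref{b1}) in a case where (\ref{b1}) need not hold.

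The paper resolves this case by landing in (\ref{b0}), not (\ref{b1}). The key observation you are missing is that $R[I]=0$ lets you scale each $I$-column by an arbitrary element of $\bF$ without disturbing $R$. After arranging (via $\bF_0$-row operations) that two rows of $Q$ vanish on $J$, one passes back down to $M\dcon e$—noting $R[e]=0$ since $e$ is a loop there—to obtain a $2\times E(M)$ block $P_0'+\omega Q_0'$ with $\rank(Q_0'[J])\le 1$, and then scales the $I$-columns so that the second row of this block lies in $\bF_0$. Absorbing that row into an enlarged $\bF_0$-matrix $R'$ produces $M\dcon e\approx M\binom{u_0+\omega v_0}{R'}$, exactly the hypothesis of Lemma~\ref{haugment}. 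Your upfront test $\rank\binom{P_0}{Q_0}\le 3$ only detects one-$\omega$-row reductions obtainable by \emph{row} operations, not by column scalings, so assuming $\rank\binom{P_0}{Q_0}=4$ does not preclude (\ref{b0}); the $\rank(Q[J])=1$ case is precisely where the column-scaling route to (\ref{b0}) is needed.
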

	\begin{proof}
		By hypothesis, there are matrices $P_1,Q_1 \in \bF_0^{[3] \times E(M)}$ such that $M \approx M\binom{P_1 + \omega Q_1}{R}$, where $P_1 = \binom{u}{P_0}$ and $Q_1 = \binom{v}{Q_0}$ for some vectors $u,v \in \bF_0^{E(M)}$. Let $W^+$ be the matrix in (\ref{b2}) with $P,Q = P_1,Q_1$ and let $M^+ = M(W^+).$ As before, we have $M \approx M^+ \con X \del S$, $r(M^+) = r+3$ and we may assume that there are sets $Z,K \subseteq E(M^+)$ with $r_M(K) \ge 5$ such that $S\cup X \subseteq Z \subseteq E(M) - K$ and $\lambda_{M^+}(Z) \le 4$. 
				
		Now $\lambda_{M}(E(M) \cap Z) \le \lambda_{M^+}(Z) \le 4$, so vertical $6$-connectivity of $M$ gives $\min(r_M(E(M) \cap Z),r(M \del Z)) \le 4$, but $r(M \del Z) \ge r_M(K) \ge 5$, so $r_M(E(M) \cap Z) \le 4$ and thus $r_{M^+}(Z) \le 7$ and $r_{M^+}(Z) \in \{6,7\}$. Let $F = \cl_{M^+}(Z)$, let $(I_1,J_1) = (E(M) \cap F, E(M)-F)$ and let $(I,J) = (I_0 \cup I_1,J_0 \cap J_1)$. 
		
		We have $r_M(I) \le (r_{M \dcon e}(I_0)+1)  + r_{M}(I_1) \le 3+4 = 7$, so by vertical $9$-connectivity of $M$ we get $r_M(J) = r$. Therefore $r_{M^+}(J) \ge r$. Moreover $r_{M^+}(J_1) = r(M^+) + \lambda_{M^+}(J_1) - r_{M^+}(F) \le (r+3) + 4 - r_{M^+}(F) = r+7-r_{M^+}(Z)$, so $r_{M^+}(J_1) \in \{r,r+1\}$. We consider the two cases separately. 
		
		If $r_{M^+}(J_1) = r$ then $r_{M^+}(J) = r$ and $W^+[J]$ is a rank-$r$ matrix with $(r+3)$ rows, so by Lemma~\ref{sunday}, $\binom{P_1+\omega Q_1}{R}[J]$ is row-equivalent to a matrix $\binom{P'}{R[J]}$ where $P' \in \bF_0^{[3] \times J}$. Therefore $\binom{P_1 + \omega Q_1}{R}$ is row-equivalent to a matrix $\binom{P + \omega Q}{R}$ where $Q[J] = 0$. Now $M \approx M\binom{P + \omega Q}{R}$ and $r_M(I) \le r_{M^+}(Z) - 3 \le 4$, so (\ref{b1}) holds.  
		
		If $r_{M^+}(J_1) = r+1$ then $r_{M^+}(F) = 6 = r_{M^+}(S)$ so $F = \cl_{M^+}(S)$. It follows that $R[I_1] = 0$. Also, $W^+[J_1]$ is a rank-$(r+1)$ matrix with $r+3$ rows, so by Lemma~\ref{sunday} the matrix $\binom{P_1 + \omega Q_1}{R}[J_1]$ is row-equivalent to a matrix $\binom{P' + \omega Q'}{R[J_1]}$ where $P',Q' \in \bF_0^{[3] \times J_1}$ and $Q'[J_1]$ has two zero rows. Therefore $\binom{P_1 + \omega Q_1}{R}$ is row-equivalent to a matrix $\binom{P + \omega Q}{R}$ where $P,Q \in \bF_0^{[3] \times E(M)}$ and $Q[J_1] = Q'$. Since $R[e] = 0$, it follows that $M \dcon e \approx M\binom{P_0' + \omega Q_0'}{R}$ for some matrices $P',Q' \in \bF_0^{[2] \times E(M)}$ with $\rank(Q_0'[J_1]) \le \rank(Q') \le 1$. We may assume (by applying $\bF_0$-row operations to $P_0' + \omega Q_0'$ if necessary) that the second row of $Q_0'[J_1]$ is zero. Now $R[I_1] = 0$, so we can scale each column of $\binom{P_0' + \omega Q_0'}{R}[I_1]$ to have its second entry in $\bF_0$. This yields an matrix $\binom{u_0 + \omega v_0}{R'}$ where $u_0,v_0$ are $\bF_0$-vectors, $R'$ is an $\bF_0$-matrix, and $M \dcon e \approx M\binom{u_0 + \omega v_0}{R'}$, so (\ref{b0}) holds. 
		
	\end{proof}	

\section{The Main Theorem}

	By Lemma~\ref{obad}, the abstract matroids corresponding to the represented matroids in $\cO(q)$ are not $\GF(q)$-regular. By Lemmas~\ref{lregular} and~\ref{hregular}, restrictions of $\olpg(r-1,q)$ and  $\hpg(r-1,q)$  are $\GF(q)$-regular. The following result, which applies to arbitrary $\GF(q^2)$-represented matroids, thus has Theorem~\ref{main} as a corollary. 
\begin{theorem}\label{maintech}
	Let $q$ be a prime power. If $M$ is a round rank-$r$ $\GF(q^2)$-represented matroid with a $\PG(12q^{12}+19,q)$-minor and no minor in $\cO(q)$, then $\si(M)$ is projectively equivalent to a restriction of either $M(\ha(r-1,q))$ or $M(\ba(r-1,q))$.
\end{theorem}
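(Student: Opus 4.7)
The plan is to prove Theorem~\ref{maintech} by induction on the rank $r$ of $M$, with Lemmas~\ref{haugment} and~\ref{laugment} supplying the inductive step. The base case is when $r$ is not much larger than $12q^{12}+20$: here Lemma~\ref{tanglecontractnumbers} (applied with $X = \emptyset$ and the tangle induced by the given $\PG$-minor) produces a minor of $M$ that is a projective geometry of large rank together with a short ``tail''; Lemma~\ref{magic} combined with the no-$\cO(q)$-minor hypothesis identifies its simplification directly as a restriction of $M(\ha)$ or $M(\ba)$, and a small amount of residual work extends the conclusion to $M$.

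For the inductive step, pick a nonloop $e \in E(M)$ lying outside a fixed $\PG(12q^{12}+19,q)$-minor. Since $M$ is round, $M \con e$ is round, still has a $\PG(12q^{12}+19,q)$-minor, and still has no $\cO(q)$-minor, so by induction $\si(M \con e)$ is projectively equivalent to a restriction of $M(\ha(r-2,q))$ or $M(\ba(r-2,q))$. Passing to the represented setting, $M \dcon e$ admits a $\GF(q^2)$-representation of the form $\binom{u_0+\omega v_0}{R}$ over $\GF(q)$ in the $\ha$-case, or the partitioned form hypothesised by Lemma~\ref{laugment} in the $\ba$-case; roundness supplies the required vertical $5$- or $9$-connectivity, so Lemma~\ref{haugment} or Lemma~\ref{laugment} applies. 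The ``good'' outcome~(1) of each (and outcome~(0) of Lemma~\ref{laugment}, which reduces to the $\ha$-case) gives a representation $\binom{P+\omega Q}{R}$ of $M$ with the $\omega$-content confined to a small block via a partition $(I,J)$; a short case analysis on which of $\rank(R[I])$ and $\rank(Q[J])$ vanishes reads off directly that $\si(M)$ is projectively equivalent to a restriction of $M(\ha(r-1,q))$ or $M(\ba(r-1,q))$, completing the induction.

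The core of the argument is ruling out the ``bad'' outcome~(2) of each augmenting lemma. Here we are handed an auxiliary matrix $W^+$ over $\GF(q^2)$ and a set $S \cup X$ of size $6$ (respectively $9$) satisfying $\kappa_{M(W^+)}(S \cup X, K) \ge 4$ (respectively $\ge 5$) for every $K \subseteq E(M)$ with $r_M(K) \ge 4$ (respectively $\ge 5$). The columns of $W^+$ on $X$ are $-\omega I_k$ stacked above $I_k$; their associated $L$-subspaces in the sense of Section~\ref{nonexamples} are two-dimensional and pairwise skew, so any two of them form a strongly $q$-bad pair in the sense of Lemma~\ref{magic}. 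Applying Lemma~\ref{tanglecontractnumbers} to $M(W^+)$ with the tangle induced by the given $\PG$-minor and with set $S \cup X$, we obtain a minor $M^{\dag}$ of $M(W^+)$ whose ground set is $(S\cup X)\cup E(N')$ for a spanning $\PG(h-1,q)$-restriction $N'$ of rank $h \ge 5$, and with $M^{\dag}|(S\cup X) = M(W^+)|(S\cup X)$. The ``moreover'' clause of Lemma~\ref{magic} (applicable because $h \ge 5$ and we have a strongly $q$-bad pair of size $2$) then produces an $\cO(q)$-minor of $M^{\dag} \con X \del S$; since $M^{\dag} \con X \del S$ is a restriction of $M = M(W^+) \con X \del S$, this contradicts the hypothesis that $M$ has no $\cO(q)$-minor.

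The main obstacle is precisely this transfer of the abstract connectivity condition on $S \cup X$ through $W^+$ into a concrete matroid to which Lemma~\ref{magic} applies. The numerical parameter $12q^{12}+19$ must survive the passage through Lemma~\ref{tanglecontractnumbers}, whose hypothesis $n \ge 2h(1 + q^{h+a}) + a + 2$ constrains $h$, $a$, and $n$; one needs $a \in \{4,6\}$ (the rank of $S \cup X$ in $W^+$) and $h \ge 5$ simultaneously, while keeping $n$ within the allowed bound. Tracking the tangle $\cT_k(M,N)$ through the $W^+$-construction and the subsequent contraction-deletion, and verifying that the good-outcome case analysis really produces a restriction of $M(\ha(r-1,q))$ or $M(\ba(r-1,q))$ (as opposed to some in-between object), are the principal bookkeeping tasks.
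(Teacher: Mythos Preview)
Your overall inductive strategy matches the paper's, but there are two genuine gaps.

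First, outcome~(1) of Lemmas~\ref{haugment} and~\ref{laugment} is not always ``directly readable''. In the sub-case $Q[J]=0$ (with $\rank(R[I])\le 1$ for Lemma~\ref{haugment}, or $r_M(I)\le 4$ for Lemma~\ref{laugment}), the $\omega$-content is confined to the small-rank set $I$, but $W[I]$ may still be $q$-bad in the sense of Lemma~\ref{magic}; nothing prevents three columns of $W[I]$ from having $L$-subspaces with trivial intersection. Since $M$ has no spanning $\PG(r-1,q)$-restriction you cannot apply Lemma~\ref{magic} to $M$ itself. The paper first invokes Lemma~\ref{tanglecontractnumbers} on $M$ (with $a=h=3$ or $a=h=4$, using roundness of $M$ to get $\kappa_{\cT}(I)=r_M(I)$ so that the resulting $N'$ is spanning) to place $I$ alongside a spanning projective geometry, and only then derives a contradiction from Lemma~\ref{magic}. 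This is not a side remark: it accounts for half of the work in each claim.

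Second, and more seriously, your treatment of outcome~(\ref{b2}) of Lemma~\ref{laugment} does not go through. Here $|X|=3$ and $r_{M^+}(S\cup X)=6$, while the lemma only gives $\kappa_{M^+}(S\cup X,K)\ge 5$. After Lemma~\ref{tanglecontractnumbers} (necessarily with $a=h=6$, as $n=12q^{12}+20$ is tight) you obtain $\lambda_{M'}(S\cup X)\ge 5$ and hence $r(M')\in\{6,7\}$; when $r(M')=7$ the $\PG(5,q)$-restriction $N'$ is not spanning and Lemma~\ref{magic} does not apply at all. Even when $N'$ is spanning, the ``moreover'' clause of Lemma~\ref{magic} produces an $\cO(q)$-minor only in $M'\con Z\del((S\cup X)\setminus Z)$ for a two-element $Z\subset X$; the third element of $X$ is then \emph{deleted} rather than contracted, so this matroid is a minor of $M^+$ but need not be a minor of $M = M^+\con X\del S$. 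The paper abandons Lemma~\ref{magic} here and instead analyses $M'\con X\del S$ by hand: writing its representation as a $4\times |E(N')|$ matrix with rows $v_0,\,v_1+\omega v_4,\,v_2+\omega v_5,\,v_3+\omega v_6$ coming from a $\GF(q)$-representation $G$ of $N'$, it locates an element $f$ whose contraction leaves a matrix containing a column from every parallel class of $\GF(q^2)^3$, so that the simplification is $\PG(2,q^2)$ and an $\cO(q)$-restriction is immediate.

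(Your base case is also stated oddly: when $N$ is spanning, Lemma~\ref{confinement} and Lemma~\ref{magic} apply directly with no tangle needed.)
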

\begin{proof}
	Let $n = 12q^{12} + 20$ and $N$ be a $\PG(n-1,q)$-minor of $M$. Let $\cT = \cT_n(M,N)$. 
	
	 If $N$ is spanning in $M$ then, by Lemma~\ref{confinement}, we have $M \approx M(A\ |\ G_r)$ for some matrices $G_r \in \cPG(r-1,q)$ and $A$, and the result follows from Lemma~\ref{magic}. We may thus assume inductively that there exists $e \in E(M)$ so that $N$ is a minor of $M \con e$ and $\si(M \con e)$ is a restriction of either $\hpg(r-2,q)$ or $\olpg(r-2,q)$. We consider these cases in two mutually exclusive claims.
	
	\begin{claim}If the matroid $\si(M \con e)$ is projectively equivalent to a restriction of $M(\ha(r-2,q))$ then the theorem holds.\end{claim}
	\begin{proof}[Proof of claim:] The matroid $M$ is round (so is vertically $5$-connected) and has a $\GF(q^2)$-representation projectively equivalent to a submatrix of $\ha(r-2,q)$; it follows that $M$ and $e$ satisfy the hypotheses of Lemma~\ref{haugment}; Define matrices $P,Q,R$ as in the conclusion of the lemma, so $M \approx M(W)$ where $W = \binom{P + \omega Q}{R}$. 
	
	If outcome (\ref{a1}) of Lemma~\ref{haugment} holds then there is a partition $(I,J)$ of $E(M)$ so that $\rank(R[I]) + \rank(Q[J]) \le 1$, so one of these matrices is zero and the other has rank at most $1$.	If $R[I] = 0$ and $\rank(Q[J]) \le 1$ then we may perform $\GF(q)$-row-operations in the first two rows so that only the first row of $Q[J]$ is nonzero and then scale each column in $I$ so that the second entry is in $\{0,1\}$; since $R[I] = 0$ it follows that $\si(M)$ is projectively equivalent to a restriction of $M(\ha(r-1,q))$, as required.    
	
	If $Q[J] = 0$ and $\rank(R[I]) \le 1$, then let $A = W[I]$. Note that $r_M(I) \le 3$. Since $Q[J] = 0$, if the matroid $\si(M(A\ |\ G_r))$ is projectively equivalent to a restriction of $M(\ha(r-1,q))$ or $M(\ba(r-1,q))$ then so is $\si(M)$. Otherwise, $A$ is $q$-bad (recall Section~\ref{nonexamples} for a definition). By roundness of $M$ and Lemma~\ref{tanglecontractnumbers} applied with $a = h = 3$, there is a rank-$3$ minor $M'$ of $M$ with a $\PG(2,q)$-restriction $N'$  so that $E(M') = E(N') \cup I$ and $M'|I = M|I$. However $M'$ is obtained from $M$ by contracting and deleting only columns in $W[J]$, so if $G_3 \in \cPG(2,q)$ then $M' \approx M(A'\ |\ G_3)$ for some matrix $A'$ that is $\GF(q)$-row-equivalent to $A$; the matrix $A'$ is also $q$-bad, so by Lemma~\ref{magic}, the matroid $M'$ has a minor in $\cO(q)$.

	
	If outcome (\ref{a3}) of the lemma holds then let $W^+$ be the given matrix and $M^+ = M(W^+)$, noting that $M \approx M^+ \con X \del S$ and that $W^+[S \cup X]$ is strongly $q$-bad (with $Z = X$). Let $\cT^+ = \cT_n(M^+,N)$. Since $\kappa_{M^+}(S \cup X,K) \ge 4$ for each basis or cobasis $K$ of $N$, it follows that $\kappa_{\cT^+}(S \cup X) = 4$ and so, by Lemma~\ref{tanglecontractnumbers} applied with $a = 4$ and $h = 5$, $M^+$ has a minor $M'$ with a $\PG(4,q)$-restriction $N'$ so that $E(M') = E(N') \cup (S \cup X)$ and $M'|(S \cup X) = M|(S \cup X)$. Similarly to the previous case, we have $M' \approx M(B\ |\  G_5)$ for some $G_5 \in \cPG(4,q)$ and some matrix $B$ that is $\GF(q)$-row-equivalent to $W^+[S \cup X]$ and hence strongly $q$-bad. By Lemma~\ref{magic}, the matroid $M' \con X \del S$, which is a minor of $M$, has a minor in $\cO(q)$, again a contradiction.
	\end{proof}
	\begin{claim}
		If the matroid $\si(M \con e)$ is projectively equivalent to a restriction of $M(\ba(r-2,q))$ but not to a restriction of $M(\ha(r-2,q))$ then the theorem holds. 
	\end{claim}
	\begin{proof}[Proof of claim:]
		Since $M$ it is vertically $9$-connected. Since $\si(M \con e)$ is projectively equivalent to a restriction of $M(\ba(r-2,q))$, it is easy to see that $M$ and $e$ satisfy the hypotheses of Lemma~\ref{laugment}. (The required partition $(I_0,J_0)$ is induced by the line $L_0$ and its complement in the column set of $\ba(r-2,q)$.) If outcome (\ref{b0}) of the lemma holds then $\si(M \con e)$ is projectively equivalent to a restriction of $M(\ha(r-2,q))$, a contradiction. Therefore (\ref{b1}) or (\ref{b2}) holds. Let $M \approx M(W)$ where $W = \binom{P + \omega Q}{R}$ as in the lemma.
		
		Suppose that (\ref{b1}) holds, and let $(I,J)$ be the associated partition of $E(M)$. If $\si(M((W[I] \ |\ G_r)))$ is projectively equivalent to a restriction of $M(\ha(r-1,q))$ or $M(\ba(r-1,q))$ then, as $W[J]$ is a $\GF(q)$-matrix, so is $\si(M)$. Therefore we may assume that this is not the case, so $W[I]$ is $q$-bad. By roundness of $M$ we have $\kappa_{\cT}(I) = r_M(I) \le 4$, so Lemma~\ref{tanglecontractnumbers} with $a=h=4$ gives a rank-$4$ minor $M'$ of $M$ with a $\PG(3,q)$-restriction $N'$ satisfying $E(M') = E(N') \cup I$ and $M'|I = M|I$. Now $E(M) - E(M') \subseteq J$ and so $M' \approx M(B\ |\ G_4)$ for some $G_4 \in \cPG(3,q)$ and some matrix $B$ that is $\GF(q)$-row-equivalent to $W[I]$ and hence $q$-bad. Lemma~\ref{magic} implies that $M'$ has a minor in $\cO(q)$, a contradiction. 
		
		Finally, suppose that (\ref{b2}) holds. Let $W^+$ be the matrix given and let $M = M(W^+)$, noting that $M = M^+ \con X \del S$. Let $\cT^+ = \cT_n(M^+,N)$. Since $\kappa_{\cM^+}(S \cup X, K) \ge 5$ for each basis or cobasis $K$ of $N$, we have $\kappa_{\cT^+}(S \cup X) \ge 5$. By Lemma~\ref{tanglecontractnumbers} with $a = h = 6$ there is a minor $M'$ of $M^+$ and a $\PG(5,q)$-restriction $N'$ of $M'$ so that $E(M') = E(N') \cup X \cup S$, $M'|(X \cup S) = M|(X \cup S)$ and $\lambda_{M'}(X \cup S) \ge 5$, from which it follows that $6 \le r(M') \le 7$. 
		
		Since $W^+[E(M)]$ is a $\GF(q)$-matrix, we have $M' \approx M(B\ | \ G)$, where $B$ is obtained by appending a row of zeroes above $W^+[S \cup X]$ and $G$ is a $\GF(q)$-representation of $N' \cong \PG(5,q)$ with $7$ rows. (If $r(M') = 6$ then the first row of $G$ is also zero). Let $v_0, \dotsc, v_6$ denote the row vectors of $G$, so $M' \con X \del S \approx M(W')$, where
		\[W' = \left(\begin{array}{c}v_0 \\ v_1 + \omega v_4 \\ v_2 + \omega v_5 \\ v_3 + \omega v_6 \end{array}\right).\]
		
		For each $i \in \{0, \dotsc, 6\}$ let $G^i$ be the matrix obtained by removing the $i$th row of $G$. Since $\tilde{M}(G) \cong \PG(5,q)$, there is some $i \in \{0,\dotsc,6\}$ so that $\tilde{M}(G^i) \cong \PG(5,q)$. Furthermore, unless $v_0 = 0$ we may choose $i$ to be nonzero. If $v_0 = 0$ then, since $\tilde{M}(G^0) \cong \PG(5,q)$, every vector in $\GF(q^2)^4$ with first component zero is a $\GF(q)$-multiple of some column of $W'$, so $\si(M(W')) \cong \PG(2,q^2)$ and $M' \con X \del S$ clearly has a restriction in $\cO(q)$, a contradiction. 
		
		Otherwise, we can choose $i$ nonzero such that $\tilde{M}(G^i) \cong \PG(5,q)$. We will suppose that $i = 6$; the other cases are similar. Since $G^6$ contains a column from every parallel class in $\GF(q)^5$, there is some $f \in E(N')$ so that $G^6[f]$ has all entries zero except its $v_3$-entry which is nonzero. Therefore $W'[f]$ has all entries zero except its last entry which is nonzero. Now consider a representation $W''$ of $M(W') \con f$ given by removing the $f$-column and last row from $W'$. Since the matrix with rows $v_0,v_1,v_2,v_4,v_5$ has a column in every parallel class in $\GF(q)^5$, it follows that $W''$ contains a column from every parallel class in $\GF(q^2)^3$, and so $\si(M(W'')) \cong \PG(2,q^2)$ and $M(W'')$ has a restriction in $\cO(q)$, a contradiction. 
	\end{proof}
	The result now follows from the two claims. 
\end{proof}

\section*{Acknowledgements}

We thank Geoff Whittle for suggesting the problem and for his very useful discussions on the proof and techniques used.

\section*{References}

\newcounter{refs}

\begin{list}{[\arabic{refs}]}
{\usecounter{refs}\setlength{\leftmargin}{10mm}\setlength{\itemsep}{0mm}}

\item\label{d95}
J.S. Dharmatilake, 
A min-max theorem using matroid separations, 
Matroid Theory Seattle, WA, 1995, 
Contemp. Math. vol. 197, Amer. Math. Soc., Providence RI (1996), pp. 333--342

\item\label{fk91}
H. Furstenberg, Y. Katznelson, 
A density version of the Hales-Jewett Theorem, 
J. Anal. Math. 57 (1991), 64--119.

\item \label{ggrw06}
J. Geelen, B. Gerards, N. Robertson, G. Whittle, 
Obstructions to branch decomposition of matroids, 
J. Combin. Theory. Ser. B 96 (2006) 560--570.

\item\label{ggwep}
J. Geelen, B. Gerards, G. Whittle, 
Disjoint cocircuits in matroids with large rank, 
J. Combin. Theory. Ser. B 87 (2003), 270--279.

\item\label{ggwnonprime}
J. Geelen, B. Gerards, G. Whittle, 
Matroid structure. I. Confined to a subfield, 
in preparation. 

\item\label{gthesis}
B. Gerards, 
Graphs and polyhedra. Binary spaces and cutting planes, 
CWI Tract vol. 73, Stichting Mathematisch Centrum Centrum voor Wiskunde en Informatica, Amsterdam, 1990.

\item\label{n13}
P. Nelson, 
Growth rate functions of dense classes of representable matroids, 
J. Combin. Theory Ser. B 103 (2013), 75--92. 

\item \label{oxley}
J. G. Oxley, 
Matroid Theory (2nd edition),
Oxford University Press, New York, 2011.

\item\label{pvz}
R. A. Pendavingh, S.H.M. van Zwam,
Lifts of matroid representations over partial fields, 
J. Combin. Theory. Ser. B 100 (2010) 36--67.

\item\label{gmx}
N. Robertson, P. D. Seymour, 
Graph Minors. X. Obstructions to Tree-Decomposition, 
J. Combin. Theory. Ser. B 52 (1991) 153--190. 

\end{list}
\end{document}